\newtheorem{theorem}{Theorem}[section]
\newtheorem{definition}[theorem]{Definition}
\newtheorem{lemma}[theorem]{Lemma}
\newtheorem{corollary}[theorem]{Corollary}
\newtheorem{example}[theorem]{Example}
\newtheorem{remark}[theorem]{Remark}
\begin{document}

%%%%%%%%%%%%%%%%%%%%%%%%%%%%%%%%%%%%%%%%%%%%%%%%%%%%%%%%%%%%%%%%%%
%%%%%          TITLE                                    %%%%%%%%%
%%%%%%%%%%%%%%%%%%%%%%%%%%%%%%%%%%%%%%%%%%%%%%%%%%%%%%%%%%%%%%%%%% 

\title 
{\textbf{Algebraic structure  of the $L_2$ analytic Fourier--Feynman transform 
associated with Gaussian processes on Wiener space}}

%%%%%%%%%%%%%%%%%%%%%%%%%%%%%%%%%%%%%%%%%%%%%%%%%%%%%%%%%%%%%%%%%%
%%%%%          authors                                   %%%%%%%%%
%%%%%%%%%%%%%%%%%%%%%%%%%%%%%%%%%%%%%%%%%%%%%%%%%%%%%%%%%%%%%%%%%% 

\author{{\sc Seung Jun Chang}\\
Department of Mathematics,  Dankook University\\
Cheonan 330-714, Korea\\
sejchang@dankook.ac.kr 
\and 
{\sc Jae Gil Choi\thanks{¢ÓCorresponding author.}}\\
Department of Mathematics, Dankook University\\
Cheonan 330-714,  Korea\\
jgchoi@dankook.ac.kr
\and
{\sc David Skoug}\\
Department of Mathematics, University of Nebraska-Lincoln\\
Lincoln, NE 68588-0130, USA\\ 
dskoug@math.unl.edu 
}

\date{\empty}

\maketitle

%%%%%%%%%%%%%%%%%%%%%%%%%%%%%%%%%%%%%%%%%%%%%%%%%%%%%%%%%%%%%%%%%%
%%%%%          abstract                                  %%%%%%%%%
%%%%%%%%%%%%%%%%%%%%%%%%%%%%%%%%%%%%%%%%%%%%%%%%%%%%%%%%%%%%%%%%%% 
\begin* 

\vspace{-.5cm}
\noindent
\thanks{{\bf Abstract}:  
In this paper we study  algebraic structures
of the classes of the $L_2$ analytic Fourier--Feynman transforms
on Wiener space. To do this we first  develop  several rotation 
properties of the generalized Wiener integral associated with 
Gaussian processes. We then proceed to analyze the $L_2$ analytic 
Fourier--Feynman transforms  associated with 
Gaussian processes. Our results show that these $L_2$  
analytic  Fourier--Feynman transforms are actually linear operator 
isomorphisms from a Hilbert space into itself. We finally   investigate  the 
algebraic structures of these classes of the transforms on Wiener 
space, and show that they  indeed are group isomorphic.}\\

\addtolength{\columnsep}{15mm} 

%%%%%%%%%%%%%%%%%%%%%%%%%%%%%%%%%%%%%%%%%%%%%%%%%%%%%%%%%%%%%%%%%%
%%%%%          Keywords                                  %%%%%%%%%
%%%%%%%%%%%%%%%%%%%%%%%%%%%%%%%%%%%%%%%%%%%%%%%%%%%%%%%%%%%%%%%%%% 
\noindent
\thanks{{\bf Keywords}: 
Paley--Wiener--Zygmund stochastic integral,
Gaussian process,
Fourier--Feynman transform associated with Gaussian paths,
monoid isomorphism,
linear operator isomorphism,
free group.}\\

%%%%%%%%%%%%%%%%%%%%%%%%%%%%%%%%%%%%%%%%%%%%%%%%%%%%%%%%%%%%%%%%%%
%%%%%          Classification                            %%%%%%%%%
%%%%%%%%%%%%%%%%%%%%%%%%%%%%%%%%%%%%%%%%%%%%%%%%%%%%%%%%%%%%%%%%%% 
\noindent

%%%%%%%%%%%%%%%%%%%%%%%%%%%%%%%%%%%%%%%%%%%%%%%%%%%%%%%%%%%%%%%%%%
%%%%%          Classification                            %%%%%%%%%
%%%%%%%%%%%%%%%%%%%%%%%%%%%%%%%%%%%%%%%%%%%%%%%%%%%%%%%%%%%%%%%%%% 
\noindent
\thanks{{\bf 2010 Mathematics Subject Classification}: 
Primary 28C20, 60J65;  Secondary  60G15, 54H15}\\

\end*

\maketitle

%\tableofcontents

%%%\newpage
%%%%%%%%%%%%%%%%%%%%%%%%%%%%%%%%%%%%%%%%%%%%%%%%%%%%%%%%%%%%%%%%%%
%%%%%%%%%%%%%%%%%%%%%%%%%%%%%%%%%%%%%%%%%%%%%%%%%%%%%%%%%%%%%%%%%%
%%%-----------------------[section]----------------------------%%%
%%%%%%%%%%%%%%%%%%%%%%%%%%%%%%%%%%%%%%%%%%%%%%%%%%%%%%%%%%%%%%%%%%
%%%%%%%%%%%%%%%%%%%%%%%%%%%%%%%%%%%%%%%%%%%%%%%%%%%%%%%%%%%%%%%%%%
\setcounter{equation}{0}

%%%%%%%%%%%%%%%%%%%%%%%%%%%%%%%%%%%%%%
%%%%%%%%%%%%%%%%%%%%%%%%%%%%%%%%%%%%%%
%%%%%%%%%%%%%%%%%%%%%%%%%%%%%%%%%%%%%%
%%%%%%%%%%%%%%%%%%%%%%%%%%%%%%%%%%%%%%
%%%%%%%%%%%%%%%%%%%%%%%%%%%%%%%%%%%%%%
%%%%%%                %%%%%%%%%%%%%%%%
%%%%%%    Section 1   %%%%%%%%%%%%%%%%
%%%%%%                %%%%%%%%%%%%%%%%
%%%%%%%%%%%%%%%%%%%%%%%%%%%%%%%%%%%%%%
%%%%%%%%%%%%%%%%%%%%%%%%%%%%%%%%%%%%%%
%%%%%%%%%%%%%%%%%%%%%%%%%%%%%%%%%%%%%%
\setcounter{equation}{0}
\section{Introduction}\label{sec:intro}

\par
Let $C_0[0,T]$ denote one parameter Wiener space. 
Bearman's rotation theorem \cite{Bearman} for Wiener measure has played 
an important role in various research areas in mathematics and physics 
involving Wiener integration theory. Bearman's theorem was further 
developed by Cameron and Storvick \cite{CS76-2} and by Johnson and 
Skoug \cite{JS79-2} in their studies of Wiener integral equations.

\par
The concept of the generalized Wiener integral 
%%(the Wiener integral associated with Gaussian processes) 
and the generalized  Feynman integral 
%%(the Feynman integral associated with Gaussian processes) 
on $C_0[0,T]$ were introduced by Park and Skoug in \cite{PS91}, further studied by Chung,
Park and Skoug in \cite{CPS93}, extended by Park and Skoug in \cite{PS95}, 
and further studied by Huffman, Park and Skoug in  \cite{HPS97}.
In \cite{CPS93,HPS97,PS91,PS95}, 
the generalized Wiener integral was defined by the Wiener integral
%------------eq-number
\begin{equation}\label{eq:idea01}
\int_{C_0[0,T]}F (\mathcal{Z}_h(x,\cdot) )dm(x),
\end{equation}
%------------
where $\mathcal{Z}_h(x,\cdot)$ is the Gaussian path given by the stochastic
integral $\mathcal{Z}_h(x,t)=\int_0^th(s)dx(s)$ with $h\in L_2[0,T]$.

\par
On the other hand,
the concept of the analytic Fourier--Feynman transform (abbr. FFT) on the Wiener 
space $C_0[0,T]$, initiated by Brue \cite{Brue}, has been developed in 
the literature. This transform  and its properties are similar in many 
respects to the ordinary Fourier  transform of functions on an Euclidean space. 
For an elementary introduction to the analytic FFT, see \cite{SS04} 
and the references cited therein.
In particular, in \cite{HPS97}, Huffman, Park 
and Skoug introduced a generalized FFT 
associated with the Gaussian paths $\mathcal Z_h(x,\cdot)$. 
Since then, the generalized FFT was further developed by many
mathematicians. For instance, see \cite{CCC17,CSC12}.

\par  
In this paper we study  algebraic structures
of the classes of the $L_2$ analytic FFTs
on Wiener space. To do this we first develop  the Bearman-type  theorems for the generalized 
Wiener integral given by \eqref{eq:idea01}. Using these results,  we  then take a 
closer look at the $L_2$ analytic FFT  associated with the 
Gaussian paths $\mathcal Z_h(x,\cdot)$ (abbr. $\mathcal Z_h$-FFT) introduced by Huffman, 
Park, and Skoug in \cite{HPS97}.  Our results  indicate that  the $L_2$ analytic $\mathcal Z_h$-FFTs 
are linear operator isomorphisms from a Hilbert space of cylinder functionals on Wiener space 
into itself. Furthermore  the algebraic structures of these generalized transforms are examined. 
Based on this examination we know that these  classes of generalized 
transforms are indeed group isomorphic.

%%%%%%%%%%%%%%%%%%%%%%%%%%%%%%%%%%%%%%
%%%%%%%%%%%%%%%%%%%%%%%%%%%%%%%%%%%%%%
%%%%%%%%%%%%%%%%%%%%%%%%%%%%%%%%%%%%%%
%%%%%%%%%%%%%%%%%%%%%%%%%%%%%%%%%%%%%%
%%%%%%%%%%%%%%%%%%%%%%%%%%%%%%%%%%%%%%
%%%%%%                %%%%%%%%%%%%%%%%
%%%%%%    Section 2   %%%%%%%%%%%%%%%%
%%%%%%                %%%%%%%%%%%%%%%%
%%%%%%%%%%%%%%%%%%%%%%%%%%%%%%%%%%%%%%
%%%%%%%%%%%%%%%%%%%%%%%%%%%%%%%%%%%%%%
%%%%%%%%%%%%%%%%%%%%%%%%%%%%%%%%%%%%%%
\setcounter{equation}{0}
\section{Preliminaries}\label{pre}
 
\par
Given a positive real $T>0$, let  $C_0[0,T]$ denote one-parameter Wiener space, 
that is, the space of  all  real-valued continuous functions $x$ on the compact 
interval $[0,T]$  with $x(0)=0$. Let $\mathcal{M}$ denote the class of  all 
Wiener measurable subsets of $C_0[0,T]$ and let $m_w$ denote Wiener measure 
which is a Gaussian measure on $C_0[0,T]$ with mean zero and covariance function 
$r(s,t)=\min\{s,t\}$.  Then, as is well-known, $(C_0[0,T],\mathcal{M},m_w)$ is a 
complete  measure space.

\par 
A subset $B$ of $C_0[0,T]$ is said to be scale-invariant measurable \cite{JS79-2}  
provided $\rho B\in \mathcal{M}$ for all $\rho>0$, and a scale-invariant measurable 
set $N$ is said to be scale-invariant null provided  $m(\rho N)=0$ for all $\rho>0$.
A property that holds except on  a scale-invariant  null set is said to be hold  
scale-invariant almost everywhere (abbr. s-a.e.). A functional $F$ is said to be 
scale-invariant measurable provided $F$ is defined on a scale-invariant measurable 
set and $F(\rho\,\,\cdot\,)$ is Wiener  measurable for every $\rho>0$. 
If two functionals $F$ and $G$ are equal s-a.e.,  we write $F\approx G$.  
 
\par
The Paley--Wiener--Zygmund (abbr. PWZ) stochastic integral \cite{PWZ33} plays 
a key role throughout this paper. For  $v$ in $L_2[0,T]$, the PWZ stochastic 
integral $\langle{v,x}\rangle$  is given by  the formula 
%------------eq
\[
\langle{v,x}\rangle
:=\lim\limits_{n\to \infty} 
\int_0^T\sum\limits_{j=1}^n(v,\phi_j)_2\phi_j(t)d x(t)
\]
%------------
for $m$-a.e. $x\in C_0[0,T]$, where $\{\phi_n\}$ is a complete orthonormal 
set of functions of bounded variation on $[0,T]$ and $(\cdot,\cdot)_2$ denotes 
the $L_2$-inner product. 

\par
It is  known  that for each $v\in L_2[0,T]$, the  PWZ integral $\langle{v,x}\rangle$  
is essentially independent of the choice of  the complete  orthonormal set  $\{\phi_n\}$.
If $v$ is of bounded variation on $[0,T]$  then $\langle{v,x}\rangle$ equals the 
Riemann--Stieltjes integral $\int_0^T v(t)dx(t)$  for s-a.e. $x\in C_0 [0,T]$,  
and   for all $v\in L_2[0,T]$,  $\langle{v,x}\rangle$ is a Gaussian random variable 
on $C_0[0,T]$ with mean zero and variance $\|v\|_2^2$. For a more  detailed study of 
the PWZ stochastic integral, see \cite{JS81, PS88}.
  
\par
Given a function $h$ in $L_2[0,T]$ with $\|h\|_2>0$, let  $\mathcal{Z}_h(x,t)$ 
be the PWZ stochastic integral 
%------------eq-number
\begin{equation}\label{eq:g-process}
\mathcal{Z}_h(x,t)
:=\langle{h\chi_{[0,t]},x}\rangle
%\equiv \int_0^t h(s) d x(s)
\end{equation}
%------------
where $\chi_{[0,t]}$ denotes the indicator function of the set $[0,t]$. Next, let
%------------eq-number
\begin{equation}\label{var-beta} 
\beta_h(t):=\int_0^t h^2(u)du. 
\end{equation}
Then the stochastic process  $\mathcal{Z}_h$ on $C_0[0,T]\times[0,T]$,
$(x,t)\mapsto \mathcal Z_h (x,y)$, is a 
Gaussian process with mean zero and covariance function
%------------eq
\[
\int_{C_0[0,T]}
\mathcal{Z}_h(x,s)\mathcal{Z}_h(x,t)dm(x)
=\beta_h({\min \{s,t\}}) .
\]
%------------eq-number
In addition, by \cite[Theorem 21.1]{Yeh73}, $\mathcal{Z}_h (\cdot, t)$ 
is stochastically  continuous in $t$ on $[0,T]$.
If $h \in L_2[0,T]$ is of bounded variation on $[0,T]$, then for all $x\in C_0[0,T]$,
$\mathcal Z_h(x,t)$ is continuous in $t$. Also, for any  
$h_1,h_2 \in  L_2[0,T]$,
%------------eq
\[
\int_{C_0[0,T]}\mathcal{Z}_{h_1}(x,s)\mathcal{Z}_{h_2}(x,t)dm(x)
=\int_{0}^{\min\{s,t\}}h_1(u)h_2(u) d u.
\]
%------------eq-number
Of course if $h(t)\equiv 1$ on $[0,T]$, 
then the process $\mathcal W$ on $C_0[0,T]\times[0,T]$ given by  
$(w,t)\mapsto\mathcal W_t(x)= \mathcal  Z_1 (x,t)=x(t)$
is a Wiener process.  
We note that the coordinate  process $\mathcal Z_1$ is stationary in time, 
whereas the stochastic process  $\mathcal{Z}_h$ generally is not.
For more detailed studies on the stochastic process  $\mathcal{Z}_h$,
see \cite{CPS93,PS91}.

From \cite[Lemma 1]{CPS93}, it follows  that for each $v\in L_2[0,T]$ and  
$h\in L_{\infty}[0,T]$,  
%------------eq-number
\begin{equation}\label{eq:pwz-property} 
\langle{v,\mathcal Z_h(x,\cdot)}\rangle
=\langle{vh,x}\rangle
\end{equation}
%------------
for  s-a.e. $x\in C_0[0,T]$. 

\par
Throughout the rest of this paper let  $\mathbb C_+$ and $\widetilde{\mathbb C}_+$ 
denote the set of complex numbers with positive real part and nonzero complex 
numbers  with nonnegative real part, respectively.

\par
Let $h$ be a function in $L_2[0,T]$ with $\|h\|_2>0$
and let  $F$ be a complex-valued scale-invariant measurable functional on $C_0[0,T]$ 
such that 
\[
J(h;\lambda):=\int_{C_0[0,T]}F(\lambda^{-1/2}\mathcal Z_h(x,\cdot))dm(x)
\]  
exists  as a finite number for all $\lambda>0$. If there exists a function 
$J^* (h;\lambda)$ analytic on $\mathbb C_+$  such that  $J^*(h;\lambda)=J(h;\lambda)$ 
for all $\lambda > 0$, then  $J^* (h;\lambda)$ is defined to be the  
analytic  Wiener integral (associated with Gaussian paths $\mathcal Z(x,\cdot)$)  
of $F$ over $C_0[0,T]$ with parameter $\lambda$, 
and for  $\lambda \in \mathbb C_+$ we write 
%------------eq
\[
\int_{C_0[0,T]}^{\text{\rm anw}_{\lambda}} 
F (\mathcal{Z}_h(x,\cdot) )dm(x):= J^*(h;\lambda).
\]
%------------
Let $q$ be a nonzero real number and let $F$ be a functional such that
\[
\int_{C_0[0,T]}^{\text{\rm anw}_{\lambda}}
F (\mathcal{Z}_h(x,\cdot) )dm(x)
\]
exists for all $\lambda \in \mathbb C_+$.
If the following limit exists, we call it the   analytic  Feynman  
integral (associated with Gaussian paths $\mathcal Z_h(x,\cdot)$) of $F$ with parameter $q$, and we write
%------------
\[
I_h^{\text{\rm anf}_{q}}[F]  
\equiv I_{h,x}^{\text{\rm anf}_{q}}[F(\mathcal Z_h(x,\cdot))] 
:= \lim_{\lambda \to -iq} 
\int_{C_0[0,T]}^{\text{\rm anw}_{\lambda}} 
F(\mathcal{Z}_h(x,\cdot))dm(x) 
\] 
%------------
where $\lambda$ approaches $-iq$ through values in $\mathbb C_+$.

\par
We are now ready to state the definition of the $L_2$ analytic 
%%$\mathcal Z_h$-Fourier--Feynman transform (abbr. 
$\mathcal Z_h$-FFT  on Wiener space.

%%%%%-------------------------------{definition}%%%%%
\begin{definition} \label{def:fft}
Let $h$ be a function in $L_2[0,T]$ with $\|h\|_2>0$ and 
let $F$ be a scale-invariant measurable functional on $C_0[0,T]$.
 For $\lambda \in \mathbb{C}_+$ 
and $y \in C_{0}[0,T]$, let 
%------------eq
\[
T_{\lambda,h}(F)(y)
:=\int_{C_0[0,T]}^{\text{\rm anw}_{\lambda}} 
F (y+\mathcal{Z}_h(x,\cdot) )dm(x). 
\]
%------------
Also, let $q$ be a nonzero real number. We define the   $L_2$  analytic $\mathcal{Z}_h$-FFT,
$T^{(2)}_{q,h}(F)$ of $F$, 
by the formula,
%------------eq
\[
T^{(2)}_{q,h}(F)(y)
:= \operatorname*{l.i.m.}_{\begin{subarray}{1}
\lambda\to -iq \\  \,\,\lambda\in \mathbb C_+\end{subarray}} 
T_{\lambda,h} (F)(y)    				 
\]
%------------
if it exists; i.e.,  for each $\rho>0$,
%------------eq
\[
\lim_{\begin{subarray}{1}
\lambda\to -iq \\  \,\,\lambda\in \mathbb C_+\end{subarray}} 
\int_{C_{0}[0,T]} 
\big| T_{\lambda,h} (F)(\rho y)
   -T^{(2)}_{q, h }(F)(\rho y) \big|^{2} dm(y)=0 .
\]
%------------
\end{definition}

\par
We note that  $T_{q,h}^{(2)}(F)$ is  defined only s-a.e.. We also note that 
if $T_{q,h}^{(2)}(F)$ exists  and if $F\approx G$, then $T_{q,h}^{(2)}(G)$ exists
and  $T_{q,h}^{(2)}(G)\approx T_{q,h}^{(2)}(F)$. One can also see that for each 
$h\in L_2[0,T]$, $T_{q,h}^{(2)}(F)\approx T_{q,-h}^{(2)}(F)$ since
%------------eq-number
\begin{equation}\label{symmetric}
\int_{C_0[0,T]}F(x)dm(x)=\int_{C_0[0,T]}F(-x)dm(x).
\end{equation}
%------------ 

%%%%%-------------------------------{remark}%%%%%
\begin{remark}  \label{referee-01}
Frankly speaking, it follows that
for any functions   $h_1$ and $h_2$  in $L_2[0,T]$ satisfying 
the equality  $h_1^2=h_2^2$  $\mathrm{m}_L$-a.e. on $[0,T]$
 ($\mathrm{m}_L$ denote the Lebesgue measure on $[0,T]$ in this paper),
\[ 
\int_{C_0[0,T]}F(\mathcal Z_{h_1}(x,\cdot))dm(x)
=\int_{C_0[0,T]}F(\mathcal Z_{h_2}(x,\cdot))dm(x) 
\] 
in the sense that if either side exists, both sides exist and equality holds.
Thus, for any scale-invariant measurable functional $F$ on $C_0[0,T]$, 
\begin{equation}\label{eq:referee-01}
 T_{q,h_1}^{(2)}(F)\approx T_{q,h_2}^{(2)}(F), 
\end{equation}
because the Gaussian processes  $\mathcal Z_{h_1}$
and $\mathcal Z_{h_2}$  have the same distribution.
\end{remark}

%%%%%-------------------------------{remark}%%%%%
\begin{remark}  
Note that if $h\equiv 1$ on $[0,T]$, then the definition of the $L_2$ 
analytic  $\mathcal Z_1$-FFT agrees with the previous definition of the  $L_2$
analytic Fourier--Feynman transform \cite{CS76-1,HPS95,JS79-1}. 
\end{remark}

%%%%%%%%%%%%%%%%%%%%%%%%%%%%%%%%%%%%%%%%%%%%%%%%%%%%%%%%%%%%%%%%%%
%%%-----------------------[section]----------------------------%%%
%%%%%%%%%%%%%%%%%%%%%%%%%%%%%%%%%%%%%%%%%%%%%%%%%%%%%%%%%%%%%%%%%%
\setcounter{equation}{0}
\section{Rotation of Wiener measures  associated with\\ Gaussian paths}

\par
In this section  we  develop  rotation theorems for the generalized Wiener integral
of the functionals on $C_0[0,T]$. 
Throughout this section we will assume that each functional $F:C_0[0,T]\to\mathbb R$ we consider 
is scale-invariant measurable  and that
%------------eq
\[
\int_{C_{0}[0,T]}\big|F(\rho \mathcal Z_h(x,\cdot))\big|dm(x)<+\infty
\]
%------------
for each $\rho>0$ and for each  $h\in L_2[0,T]$.

\par
%%%Let $\mathrm{m}_L$ denote the Lebesgue measure on $[0,T]$. 
Let $v_1$ and $v_2$ 
be functions in $L_{2}[0,T]$ with  $\|v_1\|_2^2=\|v_2\|_2^2\equiv \sigma^2>0$. 
The  random variables $X_1(x)=\langle{v_1,x}\rangle$  and  
$X_2(x)=\langle{v_2,x}\rangle$ will then have the same distribution, $N(0,\sigma^2)$.

\par  
Next let $h_1$ and $h_2$ be functions in $L_2[0,T]$. 
Then there exists a function  $\mathbf{s}\in L_2[0,T]$ such that
%------------eq-number
\begin{equation}\label{eq:fn-rot} 
\mathbf{s}^2(t)=h_1^2(t)+h_2^2(t) 
\end{equation} 
%------------
for $\mathrm{m}_L$-a.e. $t\in [0,T]$. We note that for any nonzero function $\mathbf{s}_1$ 
(``nonzero function $g$'' means that $\|g\|_2>0$ throughout this paper) 
in $L_2[0,T]$, 
there exists  $\mathbf{s}_2\in L_2[0,T]$  such that $\|\mathbf{s}_1\|_2=\|\mathbf{s}_2\|_2$, 
but, $\mathbf{s}_1 \ne \mathbf{s}_2$   $\mathrm{m}_L$-a.e. on $[0,T]$  
(more generally, $\mathrm{m}_L(\{t\in [0,T]: \mathbf{s}_1(t) \ne \mathbf{s}_2(t)\})>0$). 
Thus one can see that the 
function `$\mathbf{s}$' satisfying \eqref{eq:fn-rot} is not unique. Let $\mathbf{s}_1$ 
and $\mathbf{s}_2$ be functions in $L_{2}[0,T]$ that satisfy equation \eqref{eq:fn-rot}. 
Then, in view of the   observation above,  
$\langle{\mathbf{s}_1, x}\rangle$ and $\langle{\mathbf{s}_2, x}\rangle$ have the same 
distribution $N(0,\|h_1\|_2^2+\|h_2\|_2^2)$.

 \begin{remark}
Consider the   relation $\sim$ on
$L_2[0,T]$ given by 
\begin{equation}\label{relation-basic}
h\sim g \,\,\Longleftrightarrow\,\, h^2=g^2
\,\,\,\mathrm{m}_L\mbox{-a.e.}.
\end{equation}
Then $\sim$ is an equivalence relation.
The equivalence relation  $\sim$ is also well-defined on $L_{\infty}[0,T]$.
\end{remark}

We will use the symbol  
$\mathbf{s}(h_1,h_2)$ for the   functions `$\mathbf{s}$' that satisfy
\eqref{eq:fn-rot} above. 
%%%%%%%%%%%%%%%%%%%%%
Given functions  $h_1$ and $h_2$  in $L_{2}[0,T]$ (resp. $L_{\infty}[0,T]$),
infinitely many functions, $\mathbf{s}(h_1,h_2)$, exist   in $L_{2}[0,T]$ 
(resp. $L_{\infty}[0,T]$).
 Thus the $\mathbf{s}(h_1,h_2)$'s  can be considered as equivalence classes
of the   relation \eqref{relation-basic}, i.e.,  it allow  us that
\[
\mathbf{s}(h_1,h_2)=\Big[\sqrt{h_1^2+h_2^2}\Big]
:=\Big\{\mathbf{s}:\mathbf{s}\sim \sqrt{h_1^2+h_2^2}\Big\}.
\]  
But, by the observation above, it follows that for every function $\mathbf{s}$ 
in the equivalence class $\mathbf{s}(h_1,h_2)$, the Gaussian random variable 
$\langle{\mathbf{s},x}\rangle$ has the normal distribution 
$N(0,\|h_1\|_2^2+\|h_2\|_2^2)$.

\par
We also note that  if the functions $h_1$ and $h_2$ are in  $L_{\infty}[0,T]$, 
then we can take $\mathbf{s}(h_1,h_2)$ to 
be in $L_{\infty}[0,T]$).

\par
Inductively, given a finite sequence
$\mathcal H=(h_1,\ldots, h_n)$ of functions in $L_2[0,T]$ 
(resp. $L_{\infty}[0,T]$), let  $\mathbf{s}(h_1,h_2,\ldots,h_n)$
be the functions $\mathbf{s}$ in $L_2[0,T]$ 
(resp. $L_{\infty}[0,T]$) which satisfy the relation 
\begin{equation}\label{eq:s-def}
\mathbf{s}^2(t)=h_1^2(t)+\cdots+h_n^2(t) 
\end{equation}
for $m_L$-a.e. $t\in [0,T]$. 
Then for $k\in\{1,\ldots, n-1\}$,
%------------eq
\[
\begin{aligned}
\mathbf{s}^2(\mathbf{s}(h_1,\ldots,h_k),h_{k+1})(t)
&=\mathbf{s}^2(h_1,\ldots,h_k)(t)+h_{k+1}^2(t)  =\sum_{j=1}^{k+1}h_j^2(t) \\
&=\mathbf{s}^2(h_1,\ldots,h_k ,h_{k+1})(t)
\end{aligned}
\]
%------------
for $\mathrm{m}_L$-a.e. $t\in [0,T]$.
For convenience, given a finite sequence
$\mathcal H=(h_1,\ldots, h_n)$ we denote $\mathbf{s}(h_1,\ldots,h_n)$ 
by  $\mathbf{s}(\mathcal H)$. 
From these arguments,  we have  the following properties:

\par
(i)  For any finite sequence $\mathcal H=(h_1,\ldots,h_n)$  in
$L_2[0,T]$,
$\mathbf{s}(\mathcal H)$ has an consistency property. That is, for any  
permutation $\pi$ of $I_n=\{1,2,\ldots,n\}$,
%------------eq-number
\begin{equation}\label{eq:pre-eq}
 \mathbf{s}(h_1,h_2,\ldots,h_n)
=\mathbf{s}(h_{\pi(1)},h_{\pi(2)}\ldots,h_{\pi(n)})  
\end{equation}
%------------ 
for $\mathrm{m}_L$-a.e. $t\in [0,T]$.  

\par
(ii) Given two sequences $\mathcal H_1=(h_{11},h_{12},\ldots,h_{1n_1})$ and
$\mathcal H_2=(h_{21},h_{22},\ldots,h_{2n_2})$ of functions in 
 $L_2[0,T]$,
let $\mathcal H_1 \wedge \mathcal H_2$ denote the cementation of 
$\mathcal H_1$ and $\mathcal H_2$, i.e., 
%------------eq-number
\begin{equation}\label{wedge-H}
\mathcal H_1 \wedge \mathcal H_2
:=(h_{11},h_{12},\ldots,h_{1n_1},h_{21},h_{22},\ldots,h_{2n_2}).
\end{equation} 
%------------
Then 
%------------eq-number
\begin{equation}\label{s-wedge-H} 
\mathbf{s}(\mathcal H_1 \wedge \mathcal H_2)
=\mathbf{s}(\mathbf{s}(\mathcal H_1),\mathbf{s}(\mathcal H_2)). 
\end{equation} 
%------------ 

\par
For $h_1,h_2\in L_2[0,T]$ with $\|h_j\|_{2}>0$, $j\in\{1,2\}$, 
let $\mathcal Z_{h_1}$ and $\mathcal Z_{h_2}$ be the Gaussian processes given 
by \eqref{eq:g-process} with $h$ replaced with $h_1$ and $h_2$ respectively. 
Then the process
%------------eq
\[
\mathfrak Z_{h_1,h_2}: C_{0}[0,T]\times C_{0}[0,T]\times [0,T]\to \mathbb R
\]
%------------
given by
%------------eq
\[
\mathfrak Z_{h_1,h_2}(x_1,x_2,t)
:= \mathcal Z_{h_1}(x_1,t)+ \mathcal Z_{h_2}(x_2,t)
\]
%------------
is also a Gaussian process with mean zero and covariance 
%------------eq
\[
\begin{aligned}
&\int_{C_{0}^2[0,T]}
\mathfrak Z_{h_1,h_2}(x_1,x_2,s)
\mathfrak Z_{h_1,h_2}(x_1,x_2,t)d m^2(x_1,x_2)\\
&=\beta_{h_1}(\min\{s,t\})+\beta_{h_2}(\min\{s,t\})
\end{aligned}
\] 
%------------
where $\beta_h$ is given by \eqref{var-beta} above.

\par
Next we consider the stochastic process 
$\mathcal Z_{\mathbf{s}(h_1,h_2)}: C_0[0,T]\times[0,T]\to\mathbb R$. 
As stated in Section \ref{pre} above, the process $\mathcal Z_{\mathbf{s}(h_1,h_2)}$ 
is Gaussian with mean zero and covariance
%------------eq
\[
\begin{aligned}
\beta_{\mathbf{s}(h_1,h_2)}(\min\{s,t\})
&=\int_0^{\min\{s,t\}} \mathbf{s}^2(h_1,h_2)(u)  du\\
&=\int_0^{\min\{s,t\}}\big(h_1^2(u)+h_2^2(u)\big) du\\
&= \beta_{h_1}(\min\{s,t\})+\beta_{h_2}(\min\{s,t\}).
\end{aligned}
\]
%------------

\par
From these facts, one can see that  $\mathfrak Z_{h_1,h_2}$ 
and $\mathcal Z_{\mathbf{s}(h_1,h_2)}$ have the same distribution.
Thus we obtain the following theorems and corollaries.

%%%%%-------------------------------{theorem}%%%%%
\begin{theorem}
Let $F$ be a functional on $C_0[0,T]$, and let $h_1$ and $h_2$ be  nonzero functions 
in $L_2[0,T]$. Then
%------------eq-number
\begin{equation}\label{R001}
\begin{aligned}
&\int_{C_{0}^2[0,T]}F(\mathcal Z_{h_1}(x_1,\cdot)+\mathcal Z_{h_2}(x_2,\cdot))dm^2(x_1,x_2)\\
&=\int_{C_{0}[0,T]}F(\mathcal Z_{\mathbf{s}(h_1,h_2)}(x,\cdot))dm(x).
\end{aligned}
\end{equation}
%------------ 
\end{theorem}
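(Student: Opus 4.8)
The plan is to reduce the integral identity \eqref{R001} to the well-known fact that if two stochastic processes have the same distribution, then the expectations of any (integrable) functional of their sample paths agree. The paragraphs preceding the statement have already done the substantive work: the process $\mathfrak Z_{h_1,h_2}(x_1,x_2,\cdot)$ on the product space $C_0^2[0,T]$ is Gaussian with mean zero and covariance $\beta_{h_1}(\min\{s,t\})+\beta_{h_2}(\min\{s,t\})$, and the process $\mathcal Z_{\mathbf{s}(h_1,h_2)}(x,\cdot)$ on $C_0[0,T]$ is Gaussian with mean zero and the \emph{same} covariance. Since a mean-zero Gaussian process is determined in law by its covariance function, the two processes induce the same probability measure on the path space; consequently, for any Borel-measurable $F$ for which either side is defined, the two integrals coincide. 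This is really just the change-of-variables / image-measure theorem applied to the map sending a point of the underlying probability space to the corresponding sample path.

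In more detail, the steps I would carry out are as follows. First I would recall that both $\mathfrak Z_{h_1,h_2}$ and $\mathcal Z_{\mathbf{s}(h_1,h_2)}$ have, as functions of $t\in[0,T]$, continuous (or stochastically continuous, by \cite[Theorem 21.1]{Yeh73}) sample paths lying in $C_0[0,T]$, so that $F$ evaluated along either process makes sense. Second, I would invoke the identity of the finite-dimensional distributions: for any finite set of times $0\le t_1<\cdots<t_k\le T$, the random vectors $(\mathfrak Z_{h_1,h_2}(x_1,x_2,t_1),\ldots,\mathfrak Z_{h_1,h_2}(x_1,x_2,t_k))$ and $(\mathcal Z_{\mathbf{s}(h_1,h_2)}(x,t_1),\ldots,\mathcal Z_{\mathbf{s}(h_1,h_2)}(x,t_k))$ are both centered Gaussian with the same covariance matrix, hence identically distributed. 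Third, because the Borel $\sigma$-algebra on $C_0[0,T]$ (with the sup-norm topology) is generated by the coordinate (evaluation) maps, agreement of all finite-dimensional distributions forces the two induced laws on $C_0[0,T]$ to be equal. Finally, the abstract change-of-variables formula gives, for any bounded or nonnegative Borel $F$,
\[
\int_{C_0^2[0,T]}F\bigl(\mathfrak Z_{h_1,h_2}(x_1,x_2,\cdot)\bigr)\,dm^2(x_1,x_2)
=\int_{C_0[0,T]}F\bigl(\mathcal Z_{\mathbf{s}(h_1,h_2)}(x,\cdot)\bigr)\,dm(x),
\]
which, after unwinding the abbreviation $\mathfrak Z_{h_1,h_2}(x_1,x_2,\cdot)=\mathcal Z_{h_1}(x_1,\cdot)+\mathcal Z_{h_2}(x_2,\cdot)$, is exactly \eqref{R001}. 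The passage from bounded or nonnegative $F$ to the general integrable case follows by the usual decomposition $F=F^+-F^-$ together with the standing integrability hypothesis imposed at the start of this section.

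The only point that requires a modicum of care — and the place I would expect a referee to look — is the measurability and well-definedness of $F$ along these processes: one must know that $F$, which is only assumed scale-invariant measurable, is measurable with respect to the relevant $\sigma$-algebras, and that the exceptional (scale-invariant null) sets on which $F$ is undefined are not seen by the image measures. Here I would use that the image of Wiener measure (or product Wiener measure) under $x\mapsto \mathcal Z_h(x,\cdot)$ is absolutely continuous with respect to an appropriate Gaussian/Wiener-type measure on $C_0[0,T]$, so that scale-invariant null sets pull back to null sets; this is precisely the kind of argument underlying the definition of the $\mathcal Z_h$-Wiener integral in \cite{CPS93,PS91} and is already implicit in the standing assumptions of this section. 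Modulo this routine bookkeeping, the theorem is an immediate consequence of the covariance computation displayed just above its statement.
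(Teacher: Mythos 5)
Your proposal is correct and follows essentially the same route as the paper, which simply observes that $\mathfrak Z_{h_1,h_2}$ and $\mathcal Z_{\mathbf{s}(h_1,h_2)}$ are mean-zero Gaussian processes with identical covariance functions and hence the same distribution, so the two integrals agree; the paper gives no further argument. Your additional steps (finite-dimensional distributions, the image-measure formula, and the measurability bookkeeping for scale-invariant measurable $F$) merely make explicit what the paper leaves implicit.
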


\begin{remark}
In \cite{CC13}, the authors established the rotation theorem, namely, equation 
\eqref{R001}, only in the case that the integrand functionals $F$ are cylinder type on $C_0[0,T]$. 
But the  integrand functionals $F$ in \eqref{R001} are not restricted. 
\end{remark}

%%%%%-------------------------------{corollary}%%%%%
\begin{corollary}
Let $F$ be a functional on $C_0[0,T]$, and let $\mathcal H=(h_{1},h_{2},\ldots,h_{n})$ 
be a finite  sequence of nonzero functions in $L_2[0,T]$. Then
%------------eq
\[
\int_{C_{0}^n[0,T]}F\Bigg(\sum_{j=1}^n\mathcal Z_{h_j}(x_j,\cdot)\Bigg)dm^n(\vec x)
=\int_{C_{0}[0,T]}F(\mathcal Z_{\mathbf{s}(h_1,\ldots,h_n)}(x,\cdot))dm(x).
\]
%------------
\end{corollary}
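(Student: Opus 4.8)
The plan is to proceed by induction on $n$, with the preceding theorem serving as the engine. For $n=1$ there is nothing to prove beyond the observation that $\mathbf{s}^2(h_1)(t)=h_1^2(t)$ for $\mathrm{m}_L$-a.e.\ $t$, so that $\mathcal Z_{\mathbf{s}(h_1)}$ and $\mathcal Z_{h_1}$ have the same distribution and hence the two integrals agree; for $n=2$ the assertion is precisely identity \eqref{R001}. Applying \eqref{R001}, and its iterates obtained during the induction, to $|F|$ in place of $F$ together with the standing hypothesis of this section shows that every multiple Wiener integral appearing below is finite, so that all interchanges of the order of integration by the Fubini theorem are legitimate; I will use this freely.

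For the inductive step, suppose the identity holds for a fixed $n\ge 1$ and every sequence of $n$ functions in $L_2[0,T]$, and let $\mathcal H=(h_1,\ldots,h_{n+1})$. Writing the left-hand side as an iterated integral and integrating first over $(x_1,\ldots,x_n)$ with $x_{n+1}$ held fixed, I apply the induction hypothesis to the functional $w\mapsto F\big(w+\mathcal Z_{h_{n+1}}(x_{n+1},\cdot)\big)$ on $C_0[0,T]$, which inherits the required integrability for $m$-a.e.\ $x_{n+1}$ from the Fubini estimate above. This replaces $\sum_{j=1}^{n}\mathcal Z_{h_j}(x_j,\cdot)$ by $\mathcal Z_{\mathbf{s}(h_1,\ldots,h_n)}(x,\cdot)$; integrating the result back against $dm(x_{n+1})$ and applying Fubini once more gives
\[
\int_{C_{0}^{n+1}[0,T]}F\Bigg(\sum_{j=1}^{n+1}\mathcal Z_{h_j}(x_j,\cdot)\Bigg)dm^{n+1}(\vec x)
=\int_{C_{0}^{2}[0,T]}F\big(\mathcal Z_{\mathbf{s}(h_1,\ldots,h_n)}(x,\cdot)+\mathcal Z_{h_{n+1}}(y,\cdot)\big)dm^{2}(x,y).
\]
By the preceding theorem applied to the pair $\big(\mathbf{s}(h_1,\ldots,h_n),\,h_{n+1}\big)\in L_2[0,T]\times L_2[0,T]$, the right-hand side equals $\int_{C_{0}[0,T]}F\big(\mathcal Z_{\mathbf{s}(\mathbf{s}(h_1,\ldots,h_n),\,h_{n+1})}(x,\cdot)\big)dm(x)$.

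It remains to identify this last integral with the desired one. By the displayed computation following \eqref{eq:s-def}, $\mathbf{s}^2\big(\mathbf{s}(h_1,\ldots,h_n),h_{n+1}\big)(t)=\sum_{j=1}^{n+1}h_j^2(t)$ for $\mathrm{m}_L$-a.e.\ $t\in[0,T]$; hence $\mathbf{s}\big(\mathbf{s}(h_1,\ldots,h_n),h_{n+1}\big)$ is itself an admissible choice of the function $\mathbf{s}(h_1,\ldots,h_{n+1})$ in the sense of \eqref{eq:s-def}. Since the PWZ integral, and therefore the process $\mathcal Z_g$, depends on $g\in L_2[0,T]$ only through its $L_2$-equivalence class, and since by \eqref{eq:fn-rot} any two admissible choices of $\mathbf{s}(h_1,\ldots,h_{n+1})$ generate Gaussian processes with the same covariance and hence the same distribution, the number $\int_{C_0[0,T]}F(\mathcal Z_{\mathbf{s}(h_1,\ldots,h_{n+1})}(x,\cdot))dm(x)$ is unambiguous and equals the integral just obtained. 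This closes the induction. The one genuinely delicate point is the bookkeeping that licenses applying the induction hypothesis underneath the $x_{n+1}$-integral and interchanging the orders of integration; as noted, this is settled once and for all by applying the already-established finite cases to $|F|$, so the remaining work is routine.
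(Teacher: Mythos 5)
Your proof is correct, but it takes a different route from the one the paper intends. The paper does not argue by induction: immediately before Theorem 3.1 it observes that the sum $\sum_{j=1}^{n}\mathcal Z_{h_j}(x_j,\cdot)$ is a mean-zero Gaussian process on the product space $C_0^n[0,T]$ whose covariance function is $\sum_{j=1}^n\beta_{h_j}(\min\{s,t\})=\beta_{\mathbf{s}(h_1,\ldots,h_n)}(\min\{s,t\})$, so that this process and $\mathcal Z_{\mathbf{s}(h_1,\ldots,h_n)}$ are equidistributed and the two integrals of $F$ coincide as integrals of the same function against the same image measure; the corollary is thus obtained in one stroke, exactly as the $n=2$ case is. Your induction instead uses \eqref{R001} as a black box together with Fubini and the identity $\mathbf{s}(\mathbf{s}(h_1,\ldots,h_n),h_{n+1})=\mathbf{s}(h_1,\ldots,h_{n+1})$ displayed after \eqref{eq:s-def}. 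What the direct distributional argument buys is that it never has to apply the hypotheses on $F$ to the translated functional $w\mapsto F\bigl(w+\mathcal Z_{h_{n+1}}(x_{n+1},\cdot)\bigr)$ --- the one step of your argument that is not literally covered by the induction hypothesis as stated, since the standing assumption of the section is made for $F$ itself rather than for its translates, and the translate's scale-invariant measurability is also being taken for granted. You patch the integrability half of this correctly by first running the argument for $|F|$ via Tonelli, which is acceptable and at the same level of rigor as the paper; what your route buys in exchange is a proof that genuinely derives the corollary from Theorem 3.1 and the $n=2$ covariance computation alone, without recomputing the covariance of an $n$-fold sum.
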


%%%%%-------------------------------{theorem}%%%%%
\begin{theorem}
Let $F$ be a functional on $C_0[0,T]$, and let $\mathcal H_{1}$
and $\mathcal H_{2}$ be finite sequences  of nonzero  functions in  $L_2[0,T]$.
Then
%------------eq-number
\begin{equation}\label{R002}
\begin{aligned}
&\int_{C_{0}^2[0,T]}F(\mathcal Z_{\mathbf{s}(\mathcal H_1)}(x_1,\cdot)
+\mathcal Z_{\mathbf{s}(\mathcal H_2)}(x_2,\cdot))dm^2(x_1,x_2)\\
&=\int_{C_{0}[0,T]}F(\mathcal Z_{\mathbf{s}(\mathbf{s}(\mathcal H_1),\mathbf{s}(\mathcal H_2))}
(x,\cdot) )dm(x)\\
&=\int_{C_{0}[0,T]}F(\mathcal Z_{\mathbf{s}(\mathcal H_1\wedge \mathcal H_2)}(x,\cdot))dm(x).
\end{aligned}
\end{equation}
%------------
\end{theorem}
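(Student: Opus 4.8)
The plan is to derive the final theorem as a consequence of the preceding Theorem (equation \eqref{R001}) together with the algebraic identity \eqref{s-wedge-H}. Specifically, writing $h_1' = \mathbf{s}(\mathcal H_1)$ and $h_2' = \mathbf{s}(\mathcal H_2)$, these are two concrete elements of $L_2[0,T]$, and so the earlier Theorem applied to the pair $(h_1',h_2')$ gives
\begin{equation*}
\int_{C_0^2[0,T]} F\bigl(\mathcal Z_{h_1'}(x_1,\cdot)+\mathcal Z_{h_2'}(x_2,\cdot)\bigr)\,dm^2(x_1,x_2)
= \int_{C_0[0,T]} F\bigl(\mathcal Z_{\mathbf{s}(h_1',h_2')}(x,\cdot)\bigr)\,dm(x),
\end{equation*}
which is exactly the first equality in \eqref{R002} after unwinding the abbreviations. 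For this step one must check that the hypothesis $\|h_j'\|_2 > 0$ needed to invoke the earlier Theorem holds; this is immediate from \eqref{eq:s-def}, since $\mathbf{s}^2(\mathcal H_j) = \sum_i h_{ji}^2 \ge h_{j1}^2$ forces $\|\mathbf{s}(\mathcal H_j)\|_2 \ge \|h_{j1}\|_2 > 0$ (assuming, as is standard in this setting, that each constituent function has positive norm; if the statement permits some zero constituents one simply discards them, which does not change $\mathbf{s}$).

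The second equality in \eqref{R002} is then purely the substitution of the identity \eqref{s-wedge-H}, namely $\mathbf{s}(\mathbf{s}(\mathcal H_1),\mathbf{s}(\mathcal H_2)) = \mathbf{s}(\mathcal H_1 \wedge \mathcal H_2)$, inside the integrand. Strictly speaking, \eqref{s-wedge-H} only asserts equality for $\mathrm{m}_L$-a.e.\ $t \in [0,T]$, so what I would record is the remark that two functions in $L_2[0,T]$ agreeing $\mathrm{m}_L$-a.e.\ determine the same PWZ stochastic integral $\langle v\chi_{[0,t]}, x\rangle$ for s-a.e.\ $x$ (this follows from the construction of the PWZ integral recalled in Section \ref{pre}, since the integral depends on $v$ only through its $L_2[0,T]$ class and its Fourier coefficients against the $\phi_n$). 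Hence $\mathcal Z_{\mathbf{s}(\mathbf{s}(\mathcal H_1),\mathbf{s}(\mathcal H_2))}(x,\cdot)$ and $\mathcal Z_{\mathbf{s}(\mathcal H_1\wedge\mathcal H_2)}(x,\cdot)$ coincide as functions of $t$ for s-a.e.\ $x$, and the two Wiener integrals over $C_0[0,T]$ are equal. Since $F$ is assumed scale-invariant measurable and the integrability hypothesis of this section is in force, replacing one version of the Gaussian path by an s-a.e.\ equal one does not affect the value of the integral.

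Putting the two equalities together yields \eqref{R002}. I do not anticipate a genuine obstacle here: the content of the theorem is entirely in the earlier Theorem and in the elementary pointwise identity \eqref{eq:s-def}--\eqref{s-wedge-H}, and the only care required is the (routine) bookkeeping that "$=$ for $\mathrm{m}_L$-a.e.\ $t$" at the level of $L_2[0,T]$ functions upgrades to "$=$ s-a.e." at the level of the associated Gaussian paths, and then to genuine equality of the Wiener integrals. If anything merits a sentence of justification it is this last passage between almost-everywhere notions; everything else is direct substitution.
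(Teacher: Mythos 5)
Your proposal is correct and follows the same route the paper intends: the theorem is stated as an immediate consequence of the basic rotation identity \eqref{R001} applied to the pair $(\mathbf{s}(\mathcal H_1),\mathbf{s}(\mathcal H_2))$, combined with the algebraic identity \eqref{s-wedge-H}. Your additional remarks on upgrading ``$=$ for $\mathrm{m}_L$-a.e.\ $t$'' to equality of the Gaussian paths s-a.e.\ and on the positivity of $\|\mathbf{s}(\mathcal H_j)\|_2$ are sensible bookkeeping that the paper leaves implicit.
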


%%%%%-------------------------------{corollary}%%%%%
\begin{corollary}
Let $F$ be a functional on $C_0[0,T]$,  and 
let $\mathcal H=(h_{1},h_{2},\ldots,h_{n})$ be a finite sequence of nonzero  functions in $L_2[0,T]$
and let $h_{n+1}$ be a nonzero  function in  $L_2[0,T]$. Then
%------------eq-number
\begin{equation}\label{R007}
\begin{aligned}
&\int_{C_{0}^2[0,T]}F(\mathcal Z_{\mathbf{s}(\mathcal H)}(x_1,\cdot)
+\mathcal Z_{h_{n+1}}(x_2,\cdot))dm^2(x_1,x_2)\\
&=\int_{C_{0}[0,T]}F(\mathcal Z_{\mathbf{s}(\mathbf{s}(\mathcal H),h_{n+1})}(x,\cdot))dm(x)\\
&=\int_{C_{0}[0,T]}F(\mathcal Z_{\mathbf{s}(\mathcal H \wedge (h_{n+1}))}(x,\cdot))dm(x).
\end{aligned}
\end{equation}
\end{corollary}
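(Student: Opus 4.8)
The plan is to obtain this corollary directly from the preceding theorem (equation \eqref{R002}) by specializing the second finite sequence to a single function. First I would set $\mathcal H_1:=\mathcal H=(h_1,\ldots,h_n)$ and $\mathcal H_2:=(h_{n+1})$, the one-term sequence. With this choice the concatenation \eqref{wedge-H} gives $\mathcal H_1\wedge\mathcal H_2=\mathcal H\wedge(h_{n+1})$, so the third member of \eqref{R002} is already literally the third member of \eqref{R007}; it remains only to reconcile the left-hand and middle members.

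For a one-term sequence, definition \eqref{eq:s-def} yields $\mathbf{s}^2((h_{n+1}))(t)=h_{n+1}^2(t)$ for $\mathrm{m}_L$-a.e. $t\in[0,T]$. Hence the Gaussian processes $\mathcal Z_{\mathbf{s}((h_{n+1}))}$ and $\mathcal Z_{h_{n+1}}$ have mean zero and the same covariance $\beta_{h_{n+1}}(\min\{s,t\})$, so they have the same distribution on $C_0[0,T]$. Since $\mathcal Z_{\mathbf{s}(\mathcal H)}(x_1,\cdot)$ and $\mathcal Z_{\mathbf{s}(\mathcal H_2)}(x_2,\cdot)$ are built from the independent coordinates $x_1$ and $x_2$, the $C_0[0,T]$-valued sum process $\mathcal Z_{\mathbf{s}(\mathcal H)}(x_1,\cdot)+\mathcal Z_{\mathbf{s}(\mathcal H_2)}(x_2,\cdot)$ has the same distribution as $\mathcal Z_{\mathbf{s}(\mathcal H)}(x_1,\cdot)+\mathcal Z_{h_{n+1}}(x_2,\cdot)$; combined with the integrability hypothesis imposed at the beginning of this section, this identifies the left-hand side of \eqref{R002} with that of \eqref{R007}. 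Likewise, applying \eqref{eq:s-def} twice gives $\mathbf{s}^2(\mathbf{s}(\mathcal H),\mathbf{s}((h_{n+1})))(t)=\sum_{j=1}^{n}h_j^2(t)+h_{n+1}^2(t)=\mathbf{s}^2(\mathbf{s}(\mathcal H),h_{n+1})(t)$ for $\mathrm{m}_L$-a.e. $t$, so $\mathcal Z_{\mathbf{s}(\mathbf{s}(\mathcal H),\mathbf{s}((h_{n+1})))}$ and $\mathcal Z_{\mathbf{s}(\mathbf{s}(\mathcal H),h_{n+1})}$ have the same distribution, turning the middle member of \eqref{R002} into the middle member of \eqref{R007}.

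Chaining these three identifications with \eqref{R002} then yields \eqref{R007}. The only point requiring care — and it is the mild ``obstacle'' here — is that the symbol $\mathbf{s}$ does not denote a unique function, so every equality in the argument must be read as an equality of the laws of the associated Gaussian processes rather than of the functions themselves; once that is granted, the independence of $x_1$ and $x_2$ together with the matching of means and covariances does all the work and no further computation is needed. As a sanity check I would also note that taking $n=1$, $\mathcal H=(h_1)$, recovers the first theorem of this section (equation \eqref{R001}), since $\mathbf{s}^2((h_1))=h_1^2$ and $\mathbf{s}^2(\mathbf{s}((h_1)),h_2)=h_1^2+h_2^2=\mathbf{s}^2(h_1,h_2)$.
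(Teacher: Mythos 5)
Your proposal is correct and matches the paper's intent: the corollary is exactly the specialization of \eqref{R002} to $\mathcal H_1=\mathcal H$ and $\mathcal H_2=(h_{n+1})$, with the harmless identification $\mathbf{s}((h_{n+1}))=h_{n+1}$ up to sign (which is immaterial since only $\mathbf{s}^2$ enters the covariance, and the paper later adopts the convention $\mathbf{s}(h)\equiv h$). Your care about reading the $\mathbf{s}$-equalities as equalities of laws of the associated Gaussian processes is exactly the right reading of the paper's argument.
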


%%%%%-------------------------------{remark}%%%%%
\begin{remark}
Using seminal results by Bearman \cite{Bearman}, Cameron and Storvick \cite{CS76-2} 
established  a rotation property of the Wiener  measure $m$.  The result is summarized 
as follows: for a Wiener-integrable functional  $F$ and  every nonzero real  $a$ and $b$,
%------------eq-number
\begin{equation}\label{old}
\int_{C_{0}^2[0,T]}  F(ax+by)d(m\times m)(x,y) 
\stackrel{*}{=}\int_{C_{0}[0,T]}
F (\sqrt{a^2+b^2}z )dm(z), 
\end{equation}
%------------
where by $\stackrel{*}{=}$ we mean that if either side exists, both sides exist 
and equality holds.

\par
Let  $h_1\equiv a$ and $h_2\equiv b$ be constant functions on $[0,T]$ 
in  \eqref{R001} above. Then, 
%%%using equation \eqref{R001},
it follows that
\[
\begin{aligned}
&\int_{C_{0}^2[0,T]}  F(ax+by)dm^2(x,y)\\
&=\int_{C_{0}^2[0,T]}  F(\mathcal Z_{a}(x,\cdot)+\mathcal Z_{b}(y,\cdot))dm^2(x,y)  \\
&=\int_{C_{0}[0,T]}F (\mathcal Z_{\mathbf{s}(a,b)}(z,\cdot) )dm(z). 
\end{aligned}
\]
Furthermore, we can choose $\mathbf{s}(a,b)$ as a constant function.
In this case, one can see that  either $\mathbf{s}(a,b)=\sqrt{a^2+b^2}$ 
or $\mathbf{s}(a,b)=-\sqrt{a^2+b^2}$.
Thus, in view of equation \eqref{symmetric},
we obtain equation \eqref{old} as a special case of  \eqref{R001}.
\end{remark}

 %%%%%-------------------------------{remark}%%%%%
\begin{remark}
For any finite sequence $\mathcal H=(h_1,\ldots,h_n)$ of nonzero functions  in $L_2[0,T]$, 
let $\mathbf{s}_1$ and  $\mathbf{s}_2$
be the   functions satisfying equation \eqref{eq:s-def}.
Then, in view of  equation \eqref{eq:referee-01}, it follows that
for a  functional  $F$ on $C_0[0,T]$, 
\[
T_{q,\mathbf{s}_1}^{(2)}(F)\approx T_{q,\mathbf{s}_2}^{(2)}(F).
\]
From this we see that 
$T_{q,\mathbf{s}(\mathcal H)}^{(2)}(F)$ is consistent with
the representation of the function $\mathbf{s}(\mathcal H)$.
\end{remark}

 %%%%%-------------------------------{remark}%%%%%
\begin{remark} As mentioned in above,
if the functions $h_1$ and $h_2$ are in $L_{\infty}[0,T]$, 
then we can take $\mathbf{s}(h_1,h_2)$ to be in $L_{\infty}[0,T]$.
Thus equations \eqref{R001} through \eqref{R007} 
hold for the Gaussian processes whose kernel is in $L_{\infty}[0,T]$. 
\end{remark}

%%%%%%%%%%%%%%%%%%%%%%%%%%%%%%%%%%%%%%
%%%%%%%%%%%%%%%%%%%%%%%%%%%%%%%%%%%%%%
%%%%%%%%%%%%%%%%%%%%%%%%%%%%%%%%%%%%%%
%%%%%%%%%%%%%%%%%%%%%%%%%%%%%%%%%%%%%%
%%%%%%%%%%%%%%%%%%%%%%%%%%%%%%%%%%%%%%
%%%%%%                %%%%%%%%%%%%%%%%
%%%%%%    Section 3   %%%%%%%%%%%%%%%%
%%%%%%                %%%%%%%%%%%%%%%%
%%%%%%%%%%%%%%%%%%%%%%%%%%%%%%%%%%%%%%
%%%%%%%%%%%%%%%%%%%%%%%%%%%%%%%%%%%%%%
%%%%%%%%%%%%%%%%%%%%%%%%%%%%%%%%%%%%%%
\setcounter{equation}{0}
\section{$L_2$  analytic $\mathcal Z_h$-Fourier--Feynman transform}

\par
In this section, we  analyze  the $L_2$ analytic $\mathcal Z_h$-FFT of cylinder 
functionals.
A functional $F$ on $C_{0}[0,T]$ is called a cylinder functional if 
there exists a linearly independent subset $\{v_1,\ldots, v_m\}$
of functions in $L_2[0,T]$ such that
%------------eq-number
\begin{equation}\label{eq:cylinder1}
F(x)
=\psi(\langle{v_1,x}\rangle,\ldots,\langle{v_m,x}\rangle),
\quad x \in C_{0}[0,T],
\end{equation}
%------------ 
where $\psi$ is a complex-valued Lebesgue measurable function on $\mathbb R^m$.

\par
It is easy to show  that for the given cylinder functional $F$ of the
form \eqref{eq:cylinder1}, there exists an orthogonal  subset
$\mathcal A=\{\alpha_1,\ldots,\alpha_n\}$  of $L_2[0,T]$ such that
$F$ can be  expressed as
%------------eq-number
\begin{equation}\label{eq:cylinder-x}
F(x)
=f(\langle{\alpha_1,x}\rangle,\ldots,\langle{\alpha_n,x}\rangle),
\quad x \in C_{0}[0,T],
\end{equation}
%------------ 
where $f$ is a complex-valued Lebesgue measurable function on $\mathbb R^n$.
Thus, we loose no generality in assuming that every cylinder  functional on
$C_{0}[0,T]$ is of the form  \eqref{eq:cylinder-x}.

\par
For $h\in L_{\infty}[0,T]$ with $\|h\|_{2}>0$, let  $\mathcal{Z}_h$ be
the Gaussian process given by \eqref{eq:g-process} above and let $F$ be
given by equation \eqref{eq:cylinder-x}. Then  by equation \eqref{eq:pwz-property},
%------------eq 
\[
\begin{aligned}
F(\mathcal{Z}_h (x,\cdot))
&=f(\langle{\alpha_1,\mathcal{Z}_h (x,\cdot)}\rangle,
    \ldots ,\langle{\alpha_n,\mathcal{Z}_h (x,\cdot)}\rangle)\\
&=f(\langle{\alpha_1h,x}\rangle, \ldots ,\langle{\alpha_n h,x}\rangle).
\end{aligned}
\]
%------------ 
Even though the  set   $\mathcal{A}=\{\alpha_1,\ldots, \alpha_n\}$
of functions in $L_2[0,T]$ is  orthogonal,  the subset 
$\mathcal{A}h\equiv\{\alpha_1h,\ldots,\alpha_nh\}$ of $L_2[0,T]$ need not  
be  orthogonal.

\par
Let $\mathcal{A}=\{\alpha_1,\ldots,\alpha_n\}$  be an orthogonal set of
nonzero functions in $L_2[0,T]$, and  let $\mathcal{O}_{\infty}(\mathcal{A})$
denote the class of all nonzero functions $h$ in $L_{\infty}[0,T]$ such that 
$\mathcal{A}h$ is orthogonal in $L_2[0,T]$. Since $\dim L_2[0,T]=\infty$,  
infinitely many functions, $h$, exist in $\mathcal{O}_{\infty}(\mathcal{A})$.

%%%%%-------------------------------{example}%%%%%
\begin{example}
For any orthogonal set $\mathcal{A}=\{\alpha_1,\ldots, \alpha_n\}$ of nonzero
functions in $L_2[0,T]$, each of whose element is of bounded variation
on $[0,T]$, let $L(S)$ be the subspace of $L_2[0,T]$ which is spanned by
$S=\left\{\alpha_i\alpha_j:1\leq i<j\leq n\right\}$, and let $L(S)^{\perp}$ 
be the orthogonal
complement of $L(S)$. Let
%------------eq
\[
\mathcal{P}_{\infty}(\mathcal{A})
:=\{ h\in L_{\infty}[0,T] : h^2 \in L(S)^{\perp} \hbox{ and }  \|h\|_{2}>0\}.
\]
%------------
Since $\dim L(S)$ is finite, and $L_{\infty}[0,T]$ is dense in $L_2[0,T]$,
$\dim(L(S)^{\perp}\cap L_{\infty}[0,T]) = \infty$ and 
so $\mathcal{P}_{\infty}(\mathcal{A})$ has infinitely many elements.

\par
Let $h$ be an element of $\mathcal{P}_{\infty}(\mathcal{A})$.
It is easy to show that  $\|\alpha_j h\|_{2}>0$ for all  $j\in\{1,\ldots,n\}$.
From the definition of the $\mathcal{P}_{\infty}(\mathcal{A})$, we see that
for $i,j\in\{1,\ldots,n\}$ with $i\ne j$,
%------------eq
\[
(\alpha_i h, \alpha_j h)_{2}
=\int_0^T \alpha_i(t) \alpha_j(t) h^2(t) dt=0.
\]
%------------ 
From these, we see that  $\mathcal{A} h$ is an orthogonal set in $L_2[0,T]$
for any $h$   in $\mathcal{P}_{\infty}(\mathcal{A})$, i.e., 
$\mathcal{P}_{\infty}(\mathcal{A}) \subset \mathcal{O}_{\infty}(\mathcal{A})$.
Other examples can be found in \cite{CC13}.
\end{example}

\par
Given  $h_1$ and $h_2$ in $\mathcal O_{\infty}(\mathcal A)$,
let $\mathbf{s}(h_1,h_2)$ be an element of $L_{\infty}[0,T]$ 
which satisfies equation \eqref{eq:fn-rot} above.
Then we observe that for all $j,l\in\{1,\ldots,n\}$ with $j\ne l$,
%------------eq-number
\begin{equation}\label{eq:check01}
\begin{aligned}
(\alpha_j \mathbf{s}(h_1,h_2), \alpha_l \mathbf{s}(h_1,h_2))_2 
&=\int_0^T \alpha_j(t)\alpha_l(t) \mathbf{s}^2(h_1,h_2)(t) dt\\
&=\int_0^T \alpha_j(t)\alpha_l(t) (h_1^2(t)+h_2^2(t)) dt\\
&=\int_0^T \alpha_j(t)\alpha_l(t)h_1^2(t)dt
+\int_0^T \alpha_j(t)\alpha_l(t)h_2^2(t) dt\\
&= (\alpha_j h_1, \alpha_lh_1)_2 +(\alpha_jh_2, \alpha_lh_2)_2   =0 
\end{aligned}
\end{equation}
%------------ 
and that for each $j \in\{1,\ldots,n\}$, 
%------------eq-number
\begin{equation}\label{eq:check02}
\begin{aligned}
\|\alpha_j  \mathbf{s}(h_1,h_2)\|_2^2
&=\int_0^T \alpha_j^2h_1^2(t)dt+\int_0^T \alpha_j^2h_2^2(t)dt\\
&=\|\alpha_j h_1\|_2^2 +\|\alpha_jh_2\|_2^2.
\end{aligned}
\end{equation}
%------------
Hence, from   \eqref{eq:check01} and \eqref{eq:check02}, we see that 
$\mathcal A\mathbf{s}(h_1,h_2) 
=\{\alpha_1\mathbf{s}(h_1,h_2),\ldots,\alpha_n\mathbf{s}(h_1,h_2)\}$ 
is an orthogonal set of functions in $L_2[0,T]$  and that the PWZ stochastic  
integrals
%------------eq 
\[
\langle{\alpha_j,\mathcal Z_{\mathbf{s}(h_1,h_2)}(x,\cdot)}\rangle
= \langle{\alpha_j \mathbf{s}(h_1,h_2),x }\rangle,\,\,
j\in\{1,\ldots,n\}
\]
%------------
form a set of independent Gaussian random variables on $C_0[0,T]$.

\par
Given an orthogonal set  $\mathcal A$ of nonzero functions  in $L_2[0,T]$, 
let $\mathfrak{A}^{(2)}$ 
be the  linear  space of all functionals  $F:C_0[0,T]\to\mathbb C$  of the form
\eqref{eq:cylinder-x} for s-a.e. $x\in C_0[0,T]$, where 
$f:\mathbb R^n  \to \mathbb C$ is in  the Hilbert space  $L_2(\mathbb R^{n})$.
Note that $F\in \mathfrak{A}^{(2)}$  implies that $F$ is scale-invariant 
measurable.

 \begin{remark}
In this paper the Hilbert space  $L_2(\mathbb R^n)$ 
has  the  complex scalar field $\mathbb C$.
Thus the inner product $(\cdot, \cdot)_{L_2(\mathbb R^n)}$ on $L_2(\mathbb R^n)$ 
is given by
\[
(f_1, f_2)_{L_2(\mathbb R^n)}=\int_{\mathbb R^n} f_1(\vec{u})\overline{f_2(\vec{u})}d\vec{u}
\] 
for  $f_1, f_2 \in L_2(\mathbb R^n)$.
\end{remark}

\par 
We define a sesquilinear form 
$\langle\!\langle{\cdot,\cdot}\rangle\!\rangle_{\mathfrak{A}^{(2)}}$
on the linear space $\mathfrak{A}^{(2)}$ as follows:
for $F_1$ and $F_2$ in $\mathfrak{A}^{(2)}$, let 
%------------eq
\begin{equation}\label{sesquilinear}
\langle\!\langle{F_1,F_2}\rangle\!\rangle_{\mathfrak{A}^{(2)}}
:=(f_1, f_2)_{L_2(\mathbb R^n)}=\int_{\mathbb{R}^n} f_1(\vec{u})\overline{f_2(\vec{u})}d\vec{u},
\end{equation}
where $f_j$, $j\in\{1,2\}$, is the corresponding function of $F_j$
by equation \eqref{eq:cylinder-x}.
Then one can  see that the sesquilinear form  \eqref{sesquilinear}
is well-defined and it is an inner product on  the linear 
space $\mathfrak{A}^{(2)}$.  Also, one can see that the correspondence \eqref{eq:cylinder-x}, 
$F \mapsto f$, between $\mathfrak{A}^{(2)}$ and $L_2(\mathbb R^n)$ is one-to-one and onto.
Thus $(\mathfrak{A}^{(2)}, \|\cdot\|_{\mathfrak{A}^{(2)}})$ 
forms a complex Hilbert space, where $\|\cdot\|_{\mathfrak{A}^{(2)}}
:=\sqrt{\langle\!\langle{\cdot, \cdot}\rangle\!\rangle_{\mathfrak{A}^{(2)}}}$.

\par
Note that $(\mathfrak{A}^{(2)}, \|\cdot\|_{\mathfrak{A}^{(2)}})$
is a rich class of functionals on $C_0[0,T]$, 
because it contains  many unbounded functionals.
It is well known that  the  Schwartz class $\mathcal{S}(\mathbb{R}^n)$ 
is a dense subspace of $L_2(\mathbb R^n)$. The  functionals $F \in \mathfrak{A}^{(2)}$ 
whose  corresponding function  $f$ by equation \eqref{eq:cylinder-x} 
is of class $\mathcal{S}(\mathbb{R}^n)$  are of interest 
in Feynman integration theory and quantum mechanics.

\par
Throughout this and next  sections,  for convenience, we use the following 
notation: given an orthogonal set $\mathcal A=\{\alpha_1,\ldots,\alpha_n\}$ of nonzero
functions in  $L_2[0,T]$, $f\in L_2(\mathbb R^n)$, $q\in \mathbb R\setminus\{0\}$ 
and $h\in \mathcal O_{\infty}(\mathcal A)$, let
%------------eq-number
\begin{equation}\label{eq:int-form-fn}
\begin{aligned}
\psi_{f,\mathcal A h}^q(\vec r)
&\equiv \psi_{f,\mathcal A h}^q( r_1,\ldots,r_n)\\
:&=\Bigg(\prod_{j=1}^n \frac{-iq}{2\pi\|\alpha_jh\|_2^2}\Bigg)^{1/2}
\int_{\mathbb R^n} f(\vec u)\exp\Bigg\{\frac{iq}{2}\sum_{j=1}^n
\frac{(u_j-r_j)^2}{\|\alpha_jh\|_2^2}\Bigg\}d\vec u.
\end{aligned}
\end{equation}
%------------ 

\par
In \cite{HPS95}, Huffman, Park, and Skoug established the existence
of the $L_2$ analytic $\mathcal Z_1$-FFT for cylinder 
functionals having the form \eqref{eq:cylinder-x}. 
The following theorem is a restatement of Theorem 2.2 in \cite{HPS95}.

%%%%%-------------------------------{theorem}%%%%%
\begin{theorem}\label{thm:admix-trans}
Let $F\in \mathfrak{A}^{(2)}$ be given by equation \eqref{eq:cylinder-x}
and let $h$ be an element of $\mathcal O_{\infty}(\mathcal A)$. 
Then 
%------------{itemize} 
\begin{itemize} 
\item[(i)]
for all nonzero real $q$,
the $L_2$  analytic $\mathcal Z_h$-FFT   of $F$, $T_{q,h}^{(2)}(F)$, 
exists, belongs to $\mathfrak{A}^{(2)}$ and is given by the formula
%------------eq
\[
T_{q,h}^{(2)}(F)(y)
=\psi_{f,\mathcal A h}^q(\langle{\alpha_1h,x}\rangle,
\ldots,\langle{\alpha_nh,x}\rangle)
\]
%------------ 
for s-a.e. $y\in C_0[0,T]$, where $\psi_{f,\mathcal A h}^q$ 
is given by \eqref{eq:int-form-fn}; and
\item[(ii)]
for all nonzero real $q$, 
%------------eq
\[
T_{-q,h}^{(2)}(T_{q,h}^{(2)}(F))\approx F.
\]
%------------ 
In other words, the $L_2$ analytic  $\mathcal Z_h$-FFT, $T_{q,h}^{(2)}$ 
has the inverse transform 
\[
\{T_{q,h}^{(2)}\}^{-1}=T_{-q,h}^{(2)}.
\]
\end{itemize}
\end{theorem}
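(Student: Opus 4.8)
The plan is to reduce everything to a finite-dimensional Gaussian computation. First I would fix $F\in\mathfrak A^{(2)}$ of the form \eqref{eq:cylinder-x} with $f\in L_2(\mathbb R^n)$ and $h\in\mathcal O_\infty(\mathcal A)$. The key structural fact, already established in the paragraph containing \eqref{eq:check01}--\eqref{eq:check02}, is that $\mathcal Ah=\{\alpha_1h,\ldots,\alpha_nh\}$ is again an orthogonal set in $L_2[0,T]$, so the random variables $X_j(x):=\langle{\alpha_jh,x}\rangle=\langle{\alpha_j,\mathcal Z_h(x,\cdot)}\rangle$ are independent Gaussians with variances $\lambda_j:=\|\alpha_jh\|_2^2>0$. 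Using \eqref{eq:pwz-property} we have $F(y+\mathcal Z_h(x,\cdot))=f(\langle{\alpha_1,y}\rangle+X_1(x),\ldots,\langle{\alpha_n,y}\rangle+X_n(x))$, so that for $\lambda>0$,
\[
T_{\lambda,h}(F)(y)=\int_{C_0[0,T]}f\bigl(\langle{\alpha_1,y}\rangle+\lambda^{-1/2}X_1(x),\ldots\bigr)dm(x)
\]
collapses, by the independence and known variances, to the $n$-dimensional Gaussian convolution
\[
\Bigl(\prod_{j=1}^n\tfrac{\lambda}{2\pi\lambda_j}\Bigr)^{1/2}\int_{\mathbb R^n}f(\vec u)\exp\Bigl\{-\tfrac{\lambda}{2}\sum_{j=1}^n\tfrac{(u_j-\langle{\alpha_j,y}\rangle)^2}{\lambda_j}\Bigr\}d\vec u .
\]
Since $f\in L_2(\mathbb R^n)$, this is (a product of) the heat semigroup applied to $f$, which is entire in $\lambda$ on $\mathbb C_+$ as an $L_2(\mathbb R^n)$-valued function; this legitimizes the analytic continuation defining $T_{\lambda,h}(F)$ and shows it lies in $\mathfrak A^{(2)}$ for each $\lambda\in\mathbb C_+$, with associated function the obvious Gaussian-type integral.

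Next I would carry out the $\operatorname{l.i.m.}$ as $\lambda\to-iq$. On the Fourier transform side the Gaussian convolution becomes multiplication by $\prod_j\exp\{-\lambda_j\xi_j^2/(2\lambda)\}$, whose modulus is $\le1$ for $\operatorname{Re}\lambda>0$ and which converges pointwise (and boundedly) to $\prod_j\exp\{-i\lambda_j\xi_j^2/(2q)\}$ as $\lambda\to-iq$; by Plancherel and dominated convergence the convergence holds in $L_2(\mathbb R^n)$, hence $T_{\lambda,h}(F)$ converges in $\|\cdot\|_{\mathfrak A^{(2)}}$, and the latter norm dominates the relevant Wiener $L_2$-norm after rescaling, giving the $\operatorname{l.i.m.}$ in the sense of Definition~\ref{def:fft}. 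Identifying the limiting Gaussian integral with $\psi_{f,\mathcal Ah}^q$ of \eqref{eq:int-form-fn} is then a matter of reading off constants (replacing $-\lambda$ by $iq$), which yields part (i): $T_{q,h}^{(2)}(F)(y)=\psi_{f,\mathcal Ah}^q(\langle{\alpha_1h,x}\rangle,\ldots,\langle{\alpha_nh,x}\rangle)$ s-a.e., and this functional manifestly belongs to $\mathfrak A^{(2)}$ because $\psi_{f,\mathcal Ah}^q\in L_2(\mathbb R^n)$ (unitarity of the Gaussian Fourier multiplier of modulus one).

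For part (ii) I would exploit that, on the Fourier transform side, the function $g_j$ associated with $T_{q,h}^{(2)}(F)$ is $\widehat f$ multiplied by the unimodular factor $m_q(\xi):=\prod_j\exp\{-i\lambda_j\xi_j^2/(2q)\}$, and applying $T_{-q,h}^{(2)}$ multiplies by $m_{-q}(\xi)=\overline{m_q(\xi)}=m_q(\xi)^{-1}$; hence the composition multiplies $\widehat f$ by $1$, so $T_{-q,h}^{(2)}(T_{q,h}^{(2)}(F))$ has associated function $f$, i.e. $T_{-q,h}^{(2)}(T_{q,h}^{(2)}(F))\approx F$. One must check that $\mathcal As(h_1,h_2)$-type hypotheses are not needed here: the only requirement for iterating is $h\in\mathcal O_\infty(\mathcal A)$, which is the same for both applications, and $\|\alpha_jh\|_2$ is unchanged, so the two Gaussian multipliers really are complex conjugates. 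I expect the main obstacle to be the analytic and convergence bookkeeping in the first paragraph: justifying that $\lambda\mapsto T_{\lambda,h}(F)$ is a well-defined $\mathfrak A^{(2)}$-valued analytic function on $\mathbb C_+$ and that the $\operatorname{l.i.m.}$ genuinely exists in the scale-invariant $L_2(m)$ sense of Definition~\ref{def:fft} — in particular controlling the boundary behavior at $\lambda=-iq$ for merely $L_2$ data $f$ (rather than Schwartz $f$), which is exactly where one leans on Plancherel/boundedness of the unimodular Fourier multiplier rather than on pointwise estimates. Once that is in place, everything else is the routine Gaussian-integral identification and a one-line Fourier-multiplier argument for the inversion.
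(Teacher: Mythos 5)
Your argument is correct and is essentially the proof the paper intends: the theorem is stated there without proof, deferring to Theorem 2.2 of \cite{HPS95} and to Cameron--Storvick's Lemma H (quoted later as Lemma \ref{lem:L2}), and those arguments run exactly as you describe --- reduce to an $n$-dimensional Gaussian convolution using the independence of the $\langle{\alpha_jh,x}\rangle$ guaranteed by $h\in\mathcal O_{\infty}(\mathcal A)$, continue analytically in $\lambda$ on $\mathbb C_+$, and pass to the boundary $\lambda\to-iq$ through the unimodular Fourier multiplier, which also yields the inversion in (ii). The only point to tidy is notational: the arguments of $\psi^q_{f,\mathcal Ah}$ in the displayed formula should be $\langle{\alpha_j,y}\rangle$, as your own computation with $r_j=\langle{\alpha_j,y}\rangle$ shows (and as membership in $\mathfrak{A}^{(2)}$ requires); the $\langle{\alpha_jh,x}\rangle$ there is a typo carried over from the paper's statement.
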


\begin{theorem}\label{p:multi}
Let $F\in \mathfrak{A}^{(2)}$ be given by equation \eqref{eq:cylinder-x},
and  let $h_1$ and $h_2$ be  elements of $\mathcal O_{\infty}(\mathcal A)$. 
Then for all nonzero real $q$,
%------------eq-number
\begin{equation}\label{two-single}
T_{q,h_2}^{(2)}(T_{q,h_1}^{(2)}(F))=T_{q,\mathbf{s}(h_1,h_2)}^{(2)}(F)(y)
\end{equation}
%------------ 
for s-a.e. $y\in C_0[0,T]$.
\end{theorem}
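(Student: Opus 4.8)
The plan is to compute both sides explicitly using the closed-form expression from Theorem \ref{thm:admix-trans}(i) and verify they agree. First I would note that since $h_1,h_2\in\mathcal O_\infty(\mathcal A)$, the computation in equations \eqref{eq:check01}--\eqref{eq:check02} shows that $\mathbf s(h_1,h_2)\in\mathcal O_\infty(\mathcal A)$ as well, so all three transforms $T_{q,h_1}^{(2)}$, $T_{q,h_2}^{(2)}$ and $T_{q,\mathbf s(h_1,h_2)}^{(2)}$ are defined on $\mathfrak A^{(2)}$ and map it into itself by Theorem \ref{thm:admix-trans}(i); in particular $T_{q,h_1}^{(2)}(F)\in\mathfrak A^{(2)}$, so the left-hand iterate makes sense. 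The crucial numerical fact driving everything is that, by \eqref{eq:check02},
\[
\|\alpha_j\mathbf s(h_1,h_2)\|_2^2=\|\alpha_jh_1\|_2^2+\|\alpha_jh_2\|_2^2
\qquad (j=1,\ldots,n),
\]
so the variance appearing in $\psi^q_{f,\mathcal A\mathbf s(h_1,h_2)}$ is the sum of the two variances appearing for $h_1$ and $h_2$.

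Next I would reduce the identity to a statement purely about the associated functions on $\mathbb R^n$. By Theorem \ref{thm:admix-trans}(i), $T_{q,h_1}^{(2)}(F)$ corresponds to the function $g:=\psi^q_{f,\mathcal Ah_1}\in L_2(\mathbb R^n)$, and then $T_{q,h_2}^{(2)}(T_{q,h_1}^{(2)}(F))$ corresponds to $\psi^q_{g,\mathcal Ah_2}$, while $T_{q,\mathbf s(h_1,h_2)}^{(2)}(F)$ corresponds to $\psi^q_{f,\mathcal A\mathbf s(h_1,h_2)}$. So it suffices to prove the pointwise (or $L_2$) identity
\[
\psi^q_{\psi^q_{f,\mathcal Ah_1},\,\mathcal Ah_2}(\vec r)
=\psi^q_{f,\mathcal A\mathbf s(h_1,h_2)}(\vec r).
\]
Because every kernel in \eqref{eq:int-form-fn} factors over the index $j$, this is really a one-dimensional statement repeated $n$ times: for positive reals $a$, $b$ one must check that convolving the Gaussian-type kernel $(\,\cdot\,)\mapsto(-iq/2\pi a)^{1/2}\exp\{iq(\,\cdot\,)^2/2a\}$ with the analogous kernel of parameter $b$ reproduces the kernel of parameter $a+b$. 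I would carry this out by Fubini's theorem (justified in the analytic-continuation sense, exactly as in the proof of Theorem \ref{thm:admix-trans}), interchanging the $\vec u$ and the intermediate integration variable, then completing the square in the inner Gaussian integral; the cross term integrates to a Gaussian with variance $a+b$, and the normalizing constants multiply correctly since $(-iq/2\pi a)^{1/2}(-iq/2\pi b)^{1/2}\cdot(2\pi a b/(-iq(a+b)))^{1/2}=(-iq/2\pi(a+b))^{1/2}$.

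The main obstacle is the analytic/convergence bookkeeping: the Gaussian integrals in \eqref{eq:int-form-fn} are only conditionally convergent for real $q$ and are really defined via analytic continuation in $\lambda$ from $\mathbb C_+$ and an $\operatorname{l.i.m.}$ limit, so the Fubini interchange and the square-completion must be performed at the level of $\lambda\in\mathbb C_+$ (where everything is absolutely convergent for $f\in L_2\cap L_1$, then extended by density and the $L_2$-isometry properties) before letting $\lambda\to -iq$. I would handle this by first proving the semigroup identity $T_{\lambda,h_2}\circ T_{\lambda,h_1}=T_{\lambda,\mathbf s(h_1,h_2)}$ for $\lambda\in\mathbb C_+$ — which also follows conceptually from the rotation Theorem \eqref{R001}, since replacing $y$ by $y+\mathcal Z_{h_1}(x_1,\cdot)$ and integrating against $\mathcal Z_{h_2}(x_2,\cdot)$ is exactly the left side of \eqref{R001} with $F$ translated — and then passing to the limit using the continuity of each $T_{q,h}^{(2)}$ on $\mathfrak A^{(2)}$ established in Theorem \ref{thm:admix-trans}. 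The translation bookkeeping (the variable $y$ just rides along additively inside $\mathbf s(h_1,h_2)$ because $\langle\alpha_j,\mathcal Z_{\mathbf s(h_1,h_2)}(x,\cdot)\rangle=\langle\alpha_j\mathbf s(h_1,h_2),x\rangle$) is the only place one must be slightly careful, but it is routine given \eqref{eq:pwz-property}.
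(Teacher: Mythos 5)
Your proposal is correct in substance, and in fact it contains the paper's own argument as its ``fallback'': the paper proves \eqref{two-single} precisely by reducing to the identity $T_{\lambda,h_2}(T_{\lambda,h_1}(F))(y)=T_{\lambda,\mathbf{s}(h_1,h_2)}(F)(y)$ for real $\lambda>0$, writing the left side as the double Wiener integral of $F(y+\lambda^{-1/2}\mathcal Z_{h_1}(x_1,\cdot)+\lambda^{-1/2}\mathcal Z_{h_2}(x_2,\cdot))$, absorbing $\lambda^{-1/2}$ into the subscripts as $\mathcal Z_{h_j/\sqrt\lambda}$, and invoking the rotation identity \eqref{R001} together with $\mathbf{s}(h_1/\sqrt\lambda,h_2/\sqrt\lambda)=\lambda^{-1/2}\mathbf{s}(h_1,h_2)$; existence of both sides and the passage to $\lambda\to-iq$ are delegated to Theorem \ref{thm:admix-trans} and the analytic-continuation framework. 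Your headline strategy is different: you propose to verify the closed-form identity $\psi^q_{\psi^q_{f,\mathcal Ah_1},\,\mathcal Ah_2}=\psi^q_{f,\mathcal A\mathbf{s}(h_1,h_2)}$ directly via the one-dimensional Fresnel-kernel semigroup property, using the variance addition $\|\alpha_j\mathbf{s}(h_1,h_2)\|_2^2=\|\alpha_jh_1\|_2^2+\|\alpha_jh_2\|_2^2$ from \eqref{eq:check02}; your normalization bookkeeping checks out, and this route would work (it is essentially the Cameron--Storvick style computation), but it forces you to confront the conditional convergence of the oscillatory integrals and the Fubini interchange head-on, which is exactly the bookkeeping the paper's rotation argument avoids --- the rotation theorem handles the measure-theoretic rearrangement once and for all at the level of genuine Wiener integrals for real $\lambda$, and analyticity does the rest. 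One small correction to your last paragraph: the rotation identity \eqref{R001} is a statement about honest Wiener integrals, so it applies directly only for real $\lambda>0$ (where $\lambda^{-1/2}$ is a genuine scaling), not for general $\lambda\in\mathbb C_+$; agreement on $(0,\infty)$ plus uniqueness of the analytic extension is what propagates the identity to $\mathbb C_+$ and then to the limit $\lambda\to-iq$. You also correctly supply a point the paper leaves implicit, namely that $\mathbf{s}(h_1,h_2)$ again lies in $\mathcal O_\infty(\mathcal A)$ by \eqref{eq:check01}, so the right-hand transform is covered by Theorem \ref{thm:admix-trans}.
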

%%%%%-------------------------------{proof}%%%%%
\begin{proof}
In view of Theorem \ref{thm:admix-trans}, the two  FFTs
in equation  \eqref{two-single} exist. Thus equality is what needs to be shown. 
Hence, to establish equation \eqref{two-single}, it will suffice  to show that 
for each $\lambda>0$,
%------------eq
\[
T_{\lambda,h_2}(T_{\lambda,h_1}(F))(y)=T_{\lambda,\mathbf{s}(h_1,h_2)}(F)(y)
\]
%------------ 
for s-a.e. $y\in C_0[0,T]$. But, using  equation \eqref{R001}, 
for each $\lambda>0$ and s-a.e. $y\in C_0[0,T]$, we obtain
%------------eq
\[
\begin{aligned}
&T_{\lambda,h_2}(T_{\lambda,h_1}(F))(y)\\
&=\int_{C_0^2[0,T]}
F (y+\lambda^{-1/2}\mathcal Z_{h_1}(x_1,\cdot)
+\lambda^{-1/2}\mathcal Z_{h_2}(x_2,\cdot) )dm^2(x_1,x_2)\\
&=\int_{C_0^2[0,T]}F (y+\mathcal Z_{h_1/\sqrt{\lambda}}(x_1,\cdot)
+\mathcal Z_{h_2/\sqrt{\lambda}}(x_2,\cdot) )dm^2(x_1,x_2)\\
&=\int_{C_0[0,T]}F 
(y+\mathcal Z_{\mathbf{s}(h_1/\sqrt{\lambda},h_2/\sqrt{\lambda})}(x,\cdot) )dm (x)\\
&=\int_{C_0[0,T]}F (y+\lambda^{-1/2}\mathcal Z_{\mathbf{s}(h_1,h_2)}(x,\cdot) )dm (x)\\
&=T_{\lambda,\mathbf{s}(h_1,h_2)}(F)(y).
\end{aligned}
\]
%------------
Thus,  by the definition of the FFT,  we obtain the desired result.
\end{proof}

\par
Using mathematical induction and equation \eqref{R007}, 
we obtain the following corollary.

%%%%%-------------------------------{corollary}%%%%%
\begin{corollary}
Let $F\in \mathfrak{A}^{(2)}$ be given by equation \eqref{eq:cylinder-x},
and  let $\mathcal H=(h_1,\ldots,h_n)$ be a finite sequence in  $\mathcal O_{\infty}(\mathcal A)$. 
Then for all nonzero real $q$,
%------------eq-number
\begin{equation}\label{R004TT}
T_{q,h_n}^{(2)}(\cdots(T_{q,h_1}^{(2)}(F))\cdots)=T_{q,\mathbf{s}(\mathcal H)}^{(2)}(F)(y)
\end{equation}
%------------eq
for s-a.e. $y\in C_0[0,T]$.
\end{corollary}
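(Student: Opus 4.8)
The plan is to induct on $n$, using Theorem~\ref{p:multi} (the case $n=2$) as the engine and \eqref{s-wedge-H} as the bookkeeping device. The case $n=1$ is immediate: $\mathbf{s}(h_1)^2=h_1^2$ $\mathrm{m}_L$-a.e., and the formula in Theorem~\ref{thm:admix-trans}(i) shows that $T_{q,h}^{(2)}(F)$ depends on $h$ only through the numbers $\|\alpha_jh\|_2^2=\int_0^T\alpha_j^2h^2\,dt$, hence only through $h^2$ up to $\mathrm{m}_L$-null sets, so $T_{q,h_1}^{(2)}(F)\approx T_{q,\mathbf{s}(h_1)}^{(2)}(F)$. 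Before running the induction I would record the elementary fact that $\mathcal O_\infty(\mathcal A)$ is \emph{closed under} $\mathbf{s}$: if $\mathcal H=(h_1,\ldots,h_n)$ is a sequence in $\mathcal O_\infty(\mathcal A)$, then $\mathbf{s}(\mathcal H)\in L_\infty[0,T]$ (since $|\mathbf{s}(\mathcal H)|^2=\sum_{j=1}^nh_j^2$ a.e.), $\|\mathbf{s}(\mathcal H)\|_2^2=\sum_{j=1}^n\|h_j\|_2^2>0$, and for $i\ne l$ one has $(\alpha_i\mathbf{s}(\mathcal H),\alpha_l\mathbf{s}(\mathcal H))_2=\sum_{j=1}^n(\alpha_ih_j,\alpha_lh_j)_2=0$ while $\|\alpha_j\mathbf{s}(\mathcal H)\|_2^2=\sum_{k=1}^n\|\alpha_jh_k\|_2^2>0$ — exactly the computation \eqref{eq:check01}--\eqref{eq:check02} with $n$ summands in place of two. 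Thus $\mathbf{s}(\mathcal H)\in\mathcal O_\infty(\mathcal A)$, which in particular keeps \eqref{eq:int-form-fn} meaningful with $\mathcal A\mathbf{s}(\mathcal H)$ in place of $\mathcal Ah$.

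For the inductive step, assume \eqref{R004TT} holds for all sequences of length $n$, let $h_1,\ldots,h_{n+1}\in\mathcal O_\infty(\mathcal A)$ and put $\mathcal H=(h_1,\ldots,h_n)$. Applying the induction hypothesis to the inner $n$ transforms and Theorem~\ref{thm:admix-trans},
\[
T_{q,h_{n+1}}^{(2)}\!\big(\cdots(T_{q,h_1}^{(2)}(F))\cdots\big)
= T_{q,h_{n+1}}^{(2)}\!\big(T_{q,\mathbf{s}(\mathcal H)}^{(2)}(F)\big)
\]
for s-a.e.\ $y\in C_0[0,T]$. Since $\mathbf{s}(\mathcal H)\in\mathcal O_\infty(\mathcal A)$ by the observation above, Theorem~\ref{p:multi} applies to the pair $(\mathbf{s}(\mathcal H),h_{n+1})$ and turns the right side into $T_{q,\mathbf{s}(\mathbf{s}(\mathcal H),h_{n+1})}^{(2)}(F)$. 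Finally, \eqref{s-wedge-H} with $\mathcal H_1=\mathcal H$ and $\mathcal H_2=(h_{n+1})$ shows that $\mathbf{s}(\mathbf{s}(\mathcal H),h_{n+1})$ and $\mathbf{s}(h_1,\ldots,h_{n+1})$ both have square $\sum_{j=1}^{n+1}h_j^2$ $\mathrm{m}_L$-a.e., so by the ``depends only on the square'' remark the two transforms coincide; this is \eqref{R004TT} for $n+1$.

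An equivalent route — the one suggested by the phrase ``mathematical induction and \eqref{R007}'' — runs the same induction one level down, at the scale parameter $\lambda>0$: there the identity $T_{\lambda,h_{n+1}}(T_{\lambda,\mathbf{s}(\mathcal H)}(F))(y)=T_{\lambda,\mathbf{s}(\mathbf{s}(\mathcal H),h_{n+1})}(F)(y)=T_{\lambda,\mathbf{s}(\mathcal H\wedge(h_{n+1}))}(F)(y)$ is obtained word for word as in the proof of Theorem~\ref{p:multi}, invoking \eqref{R007} in place of \eqref{R001} after absorbing the factors $\lambda^{-1/2}$ into the Gaussian processes and using $\mathbf{s}(\mathcal H)/\sqrt\lambda=\mathbf{s}(h_1/\sqrt\lambda,\ldots,h_n/\sqrt\lambda)$ a.e.; one then passes to $\operatorname*{l.i.m.}_{\lambda\to-iq}$, every limit existing by Theorem~\ref{thm:admix-trans}. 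I do not anticipate a genuine obstacle here: all the content sits in the two bookkeeping facts — stability of $\mathcal O_\infty(\mathcal A)$ under $\mathbf{s}$, and insensitivity of $T_{q,h}^{(2)}$ to the non-unique choice of $\mathbf{s}$ (it sees only $h^2$) — both one-line consequences of computations already displayed in the paper. The only points needing care are to state every equality s-a.e.\ (respectively $\mathrm{m}_L$-a.e.\ for the function identities) and to note $\|\alpha_j\mathbf{s}(\mathcal H)\|_2>0$ at each stage so that \eqref{eq:int-form-fn} is well defined.
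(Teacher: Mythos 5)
Your proof is correct and follows essentially the same route as the paper, which disposes of this corollary in one line (``mathematical induction and equation \eqref{R007}''); your primary induction via Theorem~\ref{p:multi} and your alternative argument at the level of $T_{\lambda,\cdot}$ via \eqref{R007} are the same argument in two registers. The one detail you supply that the paper leaves implicit --- that $\mathbf{s}(\mathcal H)$ again lies in $\mathcal O_{\infty}(\mathcal A)$, so that Theorem~\ref{p:multi} may be re-applied at each stage --- is exactly the right thing to check.
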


\par
Applying equation \eqref{R002}, we also obtain the following corollary.

%%%%%-------------------------------{corollary}%%%%%
\begin{corollary}
Let $F\in \mathfrak{A}^{(2)}$ be given by equation \eqref{eq:cylinder-x},
and  let $\mathcal H_1$ and $\mathcal H_2$ be  finite sequences in  
$\mathcal O_{\infty}(\mathcal A)$. Then for all nonzero real $q$,
%------------eq-number
\begin{equation}\label{R003T}
T_{q,\mathbf{s}(\mathcal H_2)}^{(2)}(T_{q,\mathbf{s}(\mathcal H_1)}^{(2)}(F))(y)
=T_{q,\mathbf{s}(\mathcal H_1\wedge \mathcal H_2)}^{(2)}(F)(y)
\end{equation}
%------------ 
for s-a.e. $y\in C_0[0,T]$.
\end{corollary}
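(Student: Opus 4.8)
The plan is to reduce \eqref{R003T} to Theorem \ref{p:multi} by treating $\mathbf{s}(\mathcal H_1)$ and $\mathbf{s}(\mathcal H_2)$ as single elements of $\mathcal O_{\infty}(\mathcal A)$. First I would check that this reduction is legitimate, i.e. that $\mathbf{s}(\mathcal H_j)\in\mathcal O_{\infty}(\mathcal A)$ for $j\in\{1,2\}$. Writing $\mathcal H_j=(h_{j1},\ldots,h_{jn_j})$ with each $h_{jk}\in\mathcal O_{\infty}(\mathcal A)$, I would argue by induction on $n_j$ using \eqref{eq:check01} and \eqref{eq:check02}: these identities, applied with $h_1=\mathbf{s}(h_{j1},\ldots,h_{jk})$ and $h_2=h_{j,k+1}$ together with \eqref{eq:s-def}, show that $\mathcal A\,\mathbf{s}(h_{j1},\ldots,h_{j,k+1})$ is again orthogonal with each $\|\alpha_\ell\mathbf{s}(h_{j1},\ldots,h_{j,k+1})\|_2>0$; and since $\mathbf{s}^2(\mathcal H_j)=\sum_k h_{jk}^2$ with $h_{jk}\in L_{\infty}[0,T]$ and $\|\mathbf{s}(\mathcal H_j)\|_2^2=\sum_k\|h_{jk}\|_2^2>0$, we indeed get $\mathbf{s}(\mathcal H_j)\in\mathcal O_{\infty}(\mathcal A)$. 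The same induction shows $\mathbf{s}(\mathcal H_1\wedge\mathcal H_2)\in\mathcal O_{\infty}(\mathcal A)$, so by Theorem \ref{thm:admix-trans} every transform occurring in \eqref{R003T} exists and lies in $\mathfrak{A}^{(2)}$.

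With this in hand, I would apply Theorem \ref{p:multi} with the single functions $\mathbf{s}(\mathcal H_1),\mathbf{s}(\mathcal H_2)\in\mathcal O_{\infty}(\mathcal A)$ in place of $h_1,h_2$, obtaining
\[
T_{q,\mathbf{s}(\mathcal H_2)}^{(2)}\big(T_{q,\mathbf{s}(\mathcal H_1)}^{(2)}(F)\big)(y)
=T_{q,\mathbf{s}(\mathbf{s}(\mathcal H_1),\mathbf{s}(\mathcal H_2))}^{(2)}(F)(y)
\]
for s-a.e. $y\in C_0[0,T]$. Finally, by property \eqref{s-wedge-H} the functions $\mathbf{s}(\mathbf{s}(\mathcal H_1),\mathbf{s}(\mathcal H_2))$ and $\mathbf{s}(\mathcal H_1\wedge\mathcal H_2)$ agree for $\mathrm{m}_L$-a.e. $t\in[0,T]$; since the Gaussian process $\mathcal Z_h$, the norms $\|\alpha_\ell h\|_2$, and hence $T_{q,h}^{(2)}$ depend on $h$ only through its $L_2$-equivalence class (the PWZ integral is insensitive to $\mathrm{m}_L$-null modifications), we may replace $\mathbf{s}(\mathbf{s}(\mathcal H_1),\mathbf{s}(\mathcal H_2))$ by $\mathbf{s}(\mathcal H_1\wedge\mathcal H_2)$ on the right-hand side, which yields \eqref{R003T}.

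Alternatively, and essentially equivalently, one can bypass Theorem \ref{p:multi} and argue directly at the level of analytic Wiener integrals: for $\lambda>0$ write the left-hand side of \eqref{R003T} as a double Wiener integral over $C_0^2[0,T]$, use $\lambda^{-1/2}\mathcal Z_{\mathbf{s}(\mathcal H_j)}=\mathcal Z_{\mathbf{s}(\mathcal H_j/\sqrt{\lambda})}$, apply the rotation identity \eqref{R002} to collapse the two integrals into a single integral against $\mathcal Z_{\mathbf{s}((\mathcal H_1/\sqrt{\lambda})\wedge(\mathcal H_2/\sqrt{\lambda}))}$, observe $\mathbf{s}((\mathcal H_1/\sqrt{\lambda})\wedge(\mathcal H_2/\sqrt{\lambda}))=\lambda^{-1/2}\mathbf{s}(\mathcal H_1\wedge\mathcal H_2)$, and then analytically continue in $\lambda$ and pass to the limit $\lambda\to-iq$, using Theorem \ref{thm:admix-trans} for the existence of both sides. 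I expect no genuine difficulty here; the only point requiring a word of care is the bookkeeping in the first paragraph establishing $\mathbf{s}(\mathcal H_j)\in\mathcal O_{\infty}(\mathcal A)$, which is precisely where the orthogonality hypothesis on $\mathcal A$ is used.
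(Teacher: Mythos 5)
Your proposal is correct and matches the paper's argument: the paper derives this corollary by applying the rotation identity \eqref{R002}, which is exactly your "alternative" route, and your primary route via Theorem \ref{p:multi} together with \eqref{s-wedge-H} is just a repackaging of the same idea. Your extra check that $\mathbf{s}(\mathcal H_j)\in\mathcal O_{\infty}(\mathcal A)$ is a point the paper leaves implicit but is needed for the transforms to exist, so including it is a small improvement rather than a deviation.
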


%%%%%%%%%%%%%%%%%%%%%%%%%%%%%%%%%%%%%%%%%%%%%%%%%%%%%%%%%%%%%%%%%%
%%%%%%%%%%%%%%%%%%%%%%%%%%%%%%%%%%%%%%%%%%%%%%%%%%%%%%%%%%%%%%%%%%
%%%-----------------------[section]----------------------------%%%
%%%%%%%%%%%%%%%%%%%%%%%%%%%%%%%%%%%%%%%%%%%%%%%%%%%%%%%%%%%%%%%%%%
%%%%%%%%%%%%%%%%%%%%%%%%%%%%%%%%%%%%%%%%%%%%%%%%%%%%%%%%%%%%%%%%%%
\setcounter{equation}{0}
\section{Algebraic structures of  $\mathcal Z_h$-Fourier--Feynman transforms}

\par
Given $q\in\mathbb R\setminus\{0\}$, let
%------------eq
\[
\mathsf{T}_{q,\mathcal O_{\infty}(\mathcal A)}
\equiv \mathsf{T}_{q,\mathcal O_{\infty}(\mathcal A)}[\mathfrak{A}^{(2)}]
:=\{T_{q,h}^{(2)}: h \in \mathcal O_{\infty}(\mathcal A)\cup\{0\}\}
\]
%------------ 
denote the class of   $L_2$ analytic $\mathcal Z_h$-FFTs acting on $\mathfrak{A}^{(2)}$.
In the case that $h\equiv 0$, i.e., $\|h\|_2=0$, it follows that  $T_{q,h}^{(2)}\equiv T_{q,0}^{(2)}$
is the identity transform  for all $q\in\mathbb R$.
For notational convenience, let $\mathbf{s}(h)\equiv h$ for $h\in L_{\infty}[0,T]$.

\par
By Theorems \ref{thm:admix-trans} and  \ref{p:multi}, we see that
for all $h_1,h_2 \in \mathcal O_{\infty}(\mathcal A)\cup\{0\}$ and all
$F\in\mathfrak{A}^{(2)}$,
%------------eq
\[
(T_{q,h_2}^{(2)}\circ  T_{q,h_1}^{(2)})(F)
 \equiv  (T_{q,h_2}^{(2)}(T_{q,h_1}^{(2)}(F))
=T_{q,\mathbf{s}(h_1,h_2)}^{(2)}(F)
\]
%------------ 
is in $\mathfrak{A}^{(2)}$. Because 
%------------eq
\[
\mathbf{s}(\mathbf{s}(h_3,h_2),h_1)
=\mathbf{s}(h_3,h_2,h_1)=\mathbf{s}(h_3,\mathbf{s}(h_2,h_1)),
\]
%------------ 
for all $h_1,h_2,h_3 \in \mathcal O_{\infty}(\mathcal A)\cup\{0\}$,
we see that the composition $\circ$ of $L_2$  analytic FFTs 
is associative. Also, because  $\mathbf{s}(h_1,h_2)=\mathbf{s}(h_2,h_1)$, 
we see that 
$(T_{q,h_2}^{(2)}\circ  T_{q,h_1}^{(2)})(F) =(T_{q,h_1}^{(2)}\circ  T_{q,h_2}^{(2)})(F)$,
and clearly $(T_{q,0}^{(2)}\circ  T_{q,h}^{(2)})(F)=T_{q,h}^{(2)}(F)$
for any $h_1,h_2, h\in \mathcal O_{\infty}(\mathcal A)\cup\{0\}$ 
and every $F\in\mathfrak{A}^{(2)}$. Thus, we have the following assertion.

%%%%%-------------------------------{theorem}%%%%%
\begin{theorem}
The space $(\mathsf{T}_{q,\mathcal O_{\infty}(\mathcal A)} ,\circ)$
is a commutative monoid. Furthermore, the monoid
$\mathsf{T}_{q,\mathcal O_{\infty}(\mathcal A)} $ acts on the
space  $\mathfrak{A}^{(2)}$ in the sense that 
$(T_{q,h}^{(2)}, F) \mapsto  T_{q,h}^{(2)}(F)$.
\end{theorem}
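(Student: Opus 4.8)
The plan is to verify the four defining properties of a commutative monoid for the pair $(\mathsf{T}_{q,\mathcal O_{\infty}(\mathcal A)},\circ)$ and then to exhibit the monoid action. Almost all of the work has already been assembled in the discussion preceding the statement, so the proof is essentially a matter of citing Theorems \ref{thm:admix-trans} and \ref{p:multi} and organizing the observations about the functions $\mathbf{s}(\cdot,\cdot)$.

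First I would address \emph{closure}. For $h_1,h_2\in\mathcal O_\infty(\mathcal A)\cup\{0\}$ one checks that $\mathbf{s}(h_1,h_2)$ again lies in $\mathcal O_\infty(\mathcal A)\cup\{0\}$: equations \eqref{eq:check01} and \eqref{eq:check02} show that $\mathcal A\mathbf{s}(h_1,h_2)$ is orthogonal in $L_2[0,T]$ with all norms positive (so $\mathbf{s}(h_1,h_2)\in\mathcal O_\infty(\mathcal A)$ when at least one $h_j$ is nonzero), while if $h_1=h_2=0$ then $\mathbf{s}(h_1,h_2)=0$. Hence by Theorem \ref{p:multi}, $T_{q,h_2}^{(2)}\circ T_{q,h_1}^{(2)}=T_{q,\mathbf{s}(h_1,h_2)}^{(2)}\in\mathsf{T}_{q,\mathcal O_\infty(\mathcal A)}$, so $\circ$ is a well-defined binary operation on the set. (In the degenerate cases where one argument is $0$, $T_{q,0}^{(2)}$ is the identity transform, so the composition formula still holds trivially.)

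Next I would dispatch \emph{associativity}, \emph{commutativity}, and the \emph{identity}, all of which reduce via Theorem \ref{p:multi} to pointwise identities among the $\mathbf{s}$-functions that follow from \eqref{eq:s-def}: associativity from $\mathbf{s}(\mathbf{s}(h_3,h_2),h_1)=\mathbf{s}(h_3,h_2,h_1)=\mathbf{s}(h_3,\mathbf{s}(h_2,h_1))$, commutativity from $\mathbf{s}(h_1,h_2)=\mathbf{s}(h_2,h_1)$, and the identity axiom from $\mathbf{s}(0,h)=\mathbf{s}(h,0)=h$ together with the fact that $T_{q,0}^{(2)}$ is the identity map on $\mathfrak{A}^{(2)}$. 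Strictly speaking these are identities of transforms on $\mathfrak{A}^{(2)}$, so each is verified by applying both sides to an arbitrary $F\in\mathfrak{A}^{(2)}$ and invoking Theorem \ref{p:multi}; since all the functionals in question lie in $\mathfrak{A}^{(2)}$ by Theorem \ref{thm:admix-trans}, equality ``$=$'' (not merely ``$\approx$'') holds in the sense of the inner-product space $\mathfrak{A}^{(2)}$. Finally, for the monoid action, one notes that $T_{q,0}^{(2)}(F)=F$ and $(T_{q,h_2}^{(2)}\circ T_{q,h_1}^{(2)})(F)=T_{q,\mathbf{s}(h_1,h_2)}^{(2)}(F)$ are exactly the two axioms of a (left) monoid action of $\mathsf{T}_{q,\mathcal O_\infty(\mathcal A)}$ on $\mathfrak{A}^{(2)}$ via evaluation.

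I do not anticipate a genuine obstacle; the only point requiring a little care is the closure step, namely confirming that the set $\mathcal O_\infty(\mathcal A)\cup\{0\}$ is closed under the operation $(h_1,h_2)\mapsto\mathbf{s}(h_1,h_2)$ for \emph{every} choice of the (non-unique) function $\mathbf{s}(h_1,h_2)$, so that $\circ$ genuinely maps $\mathsf{T}_{q,\mathcal O_\infty(\mathcal A)}\times\mathsf{T}_{q,\mathcal O_\infty(\mathcal A)}$ into itself and the composed transform does not depend on which representative $\mathbf{s}(h_1,h_2)$ one picks. Both of these are immediate from \eqref{eq:check01}--\eqref{eq:check02} and the fact, recorded in the proof of Theorem \ref{thm:admix-trans}, that $T_{q,h}^{(2)}$ depends on $h$ only through the norms $\|\alpha_j h\|_2$; I would simply remark on this rather than belabor it.
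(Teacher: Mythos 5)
Your proposal is correct and follows essentially the same route as the paper, which derives the theorem directly from Theorems \ref{thm:admix-trans} and \ref{p:multi} together with the identities $\mathbf{s}(\mathbf{s}(h_3,h_2),h_1)=\mathbf{s}(h_3,\mathbf{s}(h_2,h_1))$, $\mathbf{s}(h_1,h_2)=\mathbf{s}(h_2,h_1)$, and the fact that $T_{q,0}^{(2)}$ is the identity. Your explicit closure check (that $\mathbf{s}(h_1,h_2)$ stays in $\mathcal O_{\infty}(\mathcal A)\cup\{0\}$ via \eqref{eq:check01}--\eqref{eq:check02}, and independence of the choice of representative $\mathbf{s}$) is only implicit in the paper's discussion, so it is a welcome addition rather than a deviation.
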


\par
Next  let ${\rm S}_{\rm f}(\mathcal O_{\infty}(\mathcal A))$ be the set of 
all finite sequences of functions in $\mathcal O_{\infty}(\mathcal A)\cup\{0\}$, 
and let
%------------eq
\[
\mathsf{T}_{q,{\rm S}_{\rm f}(\mathcal O_{\infty}(\mathcal A))}
 \equiv \mathsf{T}_{q,{\rm S}_{\rm f}(\mathcal O_{\infty}(\mathcal A))}[\mathfrak{A}^{(2)}]
:=\{T_{q,\mathbf{s}(\mathcal H)}^{(2)}: \mathcal H \in {\rm S}_{\rm f}(\mathcal O_{\infty}(\mathcal A))\}.
\]
%------------

\par
From the fact that  for any 
$\mathcal H \in {\rm S}_{\rm f}(\mathcal O_{\infty}(\mathcal A))$,
$\mathbf{s}(\mathcal H)\in \mathcal O_{\infty}(\mathcal A)\cup\{0\}
\subset {\rm S}_{\rm f}(\mathcal O_{\infty}(\mathcal A))$,  
we know that the classes $\mathsf{T}_{q,\mathcal O_{\infty}(\mathcal A)}$
and $\mathsf{T}_{q,{\rm S}_{\rm f}(\mathcal O_{\infty}(\mathcal A))}$
coincide as sets. However, we will consider another operation, $\barwedge$, 
on $\mathsf T_{q,{\rm S}_{\rm f}(\mathcal O_{\infty}(\mathcal A))}$,
defined as follows: for 
$T_{q,\mathbf{s}(\mathcal H_{1})}^{(2)}$ and $T_{q,\mathbf{s}(\mathcal H_{2})}^{(2)}$
in $\mathsf T_{{\rm S}_{\rm f}(\mathcal O_{\infty}(\mathcal A))}$, let
%------------eq
\[
T_{q,\mathbf{s}(\mathcal H_{1})}^{(2)} \barwedge T_{q,\mathbf{s}(\mathcal H_{2})}^{(2)}
:=T_{q,\mathbf{s}(\mathcal H_{1} \wedge \mathcal H_{2})}^{(2)},
\]
%------------ 
where $\mathcal H_{1} \wedge \mathcal H_{2}$  is defined by \eqref{wedge-H} above.
By equation \eqref{s-wedge-H}, one can see that the operation $\barwedge$ 
is well defined.

%%%%%-------------------------------{remark}%%%%%
\begin{remark}
By equation  \eqref{R003T}, we observe 
%------------eq
\[
T_{q,\mathbf{s}(\mathcal H_{1})}^{(2)} 
\barwedge T_{q,\mathbf{s}(\mathcal H_{2})}^{(2)}
=T_{q,\mathbf{s}(\mathcal H_{1} \wedge \mathcal H_{2})}^{(2)}
=T_{q,\mathbf{s}(\mathcal H_{1})}^{(2)} 
\circ T_{q,\mathbf{s}(\mathcal H_{2})}^{(2)}.
\]
%------------eq
\end{remark}

%%%%%-------------------------------{theorem}%%%%%
\begin{theorem}
The space
$(\mathsf{T}_{q,{\rm S}_{\rm f}(\mathcal O_{\infty}(\mathcal A))},\barwedge)$
is a commutative monoid. Furthermore, the monoid
$\mathsf{T}_{q,{\rm S}_{\rm f}(\mathcal O_{\infty}(\mathcal A))}$
acts on the space  $\mathfrak{A}^{(2)}$ in the sense that 
$(T_{q,\mathbf{s}(\mathcal H)}^{(2)}, F) 
\mapsto T_{q,\mathbf{s}(\mathcal H)}^{(2)}(F)$.
\end{theorem}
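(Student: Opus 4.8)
The plan is to verify the four defining properties of a commutative monoid for the pair $(\mathsf{T}_{q,{\rm S}_{\rm f}(\mathcal O_{\infty}(\mathcal A))},\barwedge)$, namely closure, associativity, commutativity, and existence of an identity element, and then to check the action axiom. First I would confirm \emph{closure}: given $\mathcal H_1,\mathcal H_2\in{\rm S}_{\rm f}(\mathcal O_{\infty}(\mathcal A))$, the concatenation $\mathcal H_1\wedge\mathcal H_2$ defined by \eqref{wedge-H} is again a finite sequence of functions in $\mathcal O_{\infty}(\mathcal A)\cup\{0\}$, so $T_{q,\mathbf{s}(\mathcal H_1\wedge\mathcal H_2)}^{(2)}$ lies in $\mathsf{T}_{q,{\rm S}_{\rm f}(\mathcal O_{\infty}(\mathcal A))}$; moreover by Theorem \ref{thm:admix-trans} (applied with $h=\mathbf{s}(\mathcal H_1\wedge\mathcal H_2)\in\mathcal O_{\infty}(\mathcal A)\cup\{0\}$) this transform indeed exists and maps $\mathfrak{A}^{(2)}$ into $\mathfrak{A}^{(2)}$. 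One should also note, as the excerpt already did, that $\barwedge$ is well defined on the class viewed as a set of transforms: this is exactly the content of equation \eqref{s-wedge-H}, since two sequences $\mathcal H,\mathcal H'$ with $\mathbf{s}(\mathcal H)=\mathbf{s}(\mathcal H')$ produce the same transform, and $\mathbf{s}(\mathcal H_1\wedge\mathcal H_2)$ depends on $\mathcal H_1,\mathcal H_2$ only through $\mathbf{s}(\mathcal H_1)$ and $\mathbf{s}(\mathcal H_2)$.

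Next I would establish \emph{associativity} and \emph{commutativity} by reducing them to the corresponding combinatorial identities for the concatenation operation $\wedge$ on finite sequences, together with the symmetry property \eqref{eq:pre-eq} of $\mathbf{s}$ under permutations. For associativity, concatenation of finite sequences is literally associative, $(\mathcal H_1\wedge\mathcal H_2)\wedge\mathcal H_3=\mathcal H_1\wedge(\mathcal H_2\wedge\mathcal H_3)$, hence
\[
(T_{q,\mathbf{s}(\mathcal H_1)}^{(2)}\barwedge T_{q,\mathbf{s}(\mathcal H_2)}^{(2)})\barwedge T_{q,\mathbf{s}(\mathcal H_3)}^{(2)}
=T_{q,\mathbf{s}((\mathcal H_1\wedge\mathcal H_2)\wedge\mathcal H_3)}^{(2)}
=T_{q,\mathbf{s}(\mathcal H_1\wedge(\mathcal H_2\wedge\mathcal H_3))}^{(2)}
=T_{q,\mathbf{s}(\mathcal H_1)}^{(2)}\barwedge(T_{q,\mathbf{s}(\mathcal H_2)}^{(2)}\barwedge T_{q,\mathbf{s}(\mathcal H_3)}^{(2)}).
\]
For commutativity, although $\mathcal H_1\wedge\mathcal H_2\neq\mathcal H_2\wedge\mathcal H_1$ as sequences, the two concatenations differ only by a permutation of their entries, so by \eqref{eq:pre-eq} we get $\mathbf{s}(\mathcal H_1\wedge\mathcal H_2)=\mathbf{s}(\mathcal H_2\wedge\mathcal H_1)$ for $\mathrm{m}_L$-a.e.\ $t$, whence $T_{q,\mathbf{s}(\mathcal H_1\wedge\mathcal H_2)}^{(2)}=T_{q,\mathbf{s}(\mathcal H_2\wedge\mathcal H_1)}^{(2)}$ as functionals on $\mathfrak{A}^{(2)}$, i.e.\ $T_{q,\mathbf{s}(\mathcal H_1)}^{(2)}\barwedge T_{q,\mathbf{s}(\mathcal H_2)}^{(2)}=T_{q,\mathbf{s}(\mathcal H_2)}^{(2)}\barwedge T_{q,\mathbf{s}(\mathcal H_1)}^{(2)}$.

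For the \emph{identity element}, I would take the length-one sequence $(0)$, or more generally any sequence all of whose entries are $0$: since $\mathbf{s}^2((0)\wedge\mathcal H)(t)=0+\sum h_j^2(t)=\mathbf{s}^2(\mathcal H)(t)$ by \eqref{eq:s-def}, we have $T_{q,\mathbf{s}((0))}^{(2)}\barwedge T_{q,\mathbf{s}(\mathcal H)}^{(2)}=T_{q,\mathbf{s}(\mathcal H)}^{(2)}$, and $T_{q,\mathbf{s}((0))}^{(2)}=T_{q,0}^{(2)}$ is the identity transform on $\mathfrak{A}^{(2)}$ as recorded at the start of this section. Finally the \emph{monoid action} on $\mathfrak{A}^{(2)}$ via $(T_{q,\mathbf{s}(\mathcal H)}^{(2)},F)\mapsto T_{q,\mathbf{s}(\mathcal H)}^{(2)}(F)$ is compatible with $\barwedge$ because of equation \eqref{R003T}, which gives $T_{q,\mathbf{s}(\mathcal H_2)}^{(2)}(T_{q,\mathbf{s}(\mathcal H_1)}^{(2)}(F))=T_{q,\mathbf{s}(\mathcal H_1\wedge\mathcal H_2)}^{(2)}(F)=(T_{q,\mathbf{s}(\mathcal H_1)}^{(2)}\barwedge T_{q,\mathbf{s}(\mathcal H_2)}^{(2)})(F)$ for s-a.e.\ $y$, together with the fact that $T_{q,0}^{(2)}$ acts as the identity. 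I do not anticipate a genuine obstacle here: every step is a direct translation of a combinatorial fact about finite sequences into a statement about transforms, mediated by the already-proved identities \eqref{eq:pre-eq}, \eqref{s-wedge-H}, \eqref{R003T} and Theorem \ref{thm:admix-trans}; the only point requiring a modicum of care is the well-definedness of $\barwedge$ on equivalence classes of sequences, which is precisely \eqref{s-wedge-H}.
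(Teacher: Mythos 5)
Your proposal is correct and follows essentially the route the paper intends: the paper leaves this theorem without an explicit proof, relying on the preceding observations that $\barwedge$ is well defined via \eqref{s-wedge-H}, that it agrees with $\circ$ via \eqref{R003T}, and that associativity, commutativity and the identity reduce to the corresponding properties of $\mathbf{s}$ and concatenation --- exactly the identities \eqref{eq:pre-eq}, \eqref{s-wedge-H}, \eqref{R003T} and Theorem \ref{thm:admix-trans} that you invoke. Your direct axiom-by-axiom verification is a faithful and slightly more self-contained filling-in of that implicit argument, with no gaps.
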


%%%%%-------------------------------{remark}%%%%%
\begin{remark}
The operation $\barwedge$ is a semigroup action of
$\mathsf{T}_{q,{\rm S}_{\rm f}(\mathcal O_{\infty}(\mathcal A))}$
on $\mathfrak{A}^{(2)}$.
\end{remark}

\par
The sequence space ${\rm S}_{\rm f}(\mathcal O_{\infty}(\mathcal A))$ is a 
monoid under the operation $\wedge$  given by \eqref{wedge-H}. 
Define an equivalence relation $\stackrel{\mathbf{s}}{\sim}$ on
${\rm S}_{\rm f}(\mathcal O_{\infty}(\mathcal A))$ as follows: 
for $\mathcal H_{1}$ and $\mathcal H_{2}$ 
in ${\rm S}_{\rm f}(\mathcal O_{\infty}(\mathcal A))$,
%------------eq
\[
\mathcal H_{1} \stackrel{\mathbf{s}}{\sim} \mathcal H_{2}
\Longleftrightarrow  \mathbf{s}(\mathcal H_{1})=\mathbf{s}(\mathcal H_{2}).
\]
%------------eq
Also, let
%------------eq
\[
{\rm S}_{\rm f}^{\sim}
\equiv  {\rm S}_{\rm f}(\mathcal O_{\infty}(\mathcal A))/\stackrel{\mathbf{s}}{\sim}
\,:= \{ [\mathcal H]_{\mathbf{s}} : 
\mathcal H\in  {\rm S}_{\rm f}(\mathcal O_{\infty}(\mathcal A)) \}
\]
%------------eq
be the quotient set of ${\rm S}_{\rm f}(\mathcal O_{\infty}(\mathcal A))$ by
$\stackrel{\mathbf{s}}{\sim}$. Then from  \eqref{eq:s-def} and \eqref{eq:pre-eq}, 
we  see that  ${\rm S}_{\rm f}^{\sim}$ is the quotient
monoid under the operation $\wedge$ on ${\rm S}_{\rm f}^{\sim}$, given by
%------------eq-number
\begin{equation}\label{eq:quotientop-s}
 [\mathcal H_{1}]_{\mathbf{s}} \wedge [\mathcal H_{2}]_{\mathbf{s}}
:= [\mathcal H_{1} \wedge \mathcal H_{2}]_{\mathbf{s}}.
\end{equation}
%------------

\par
Define a   relation on 
$\mathsf{T}_{q,{\rm S}_{\rm f}(\mathcal O_{\infty}(\mathcal A))}$
as follows: for
$T_{q,\mathbf{s}(\mathcal H_{1})}^{(2)}$ and $ T_{q,\mathbf{s}(\mathcal H_{2})}^{(2)}$  in
$\mathsf{T}_{q,{\rm S}_{\rm f}(\mathcal O_{\infty}(\mathcal A))}$,
%------------eq 
\[
T_{q,\mathbf{s}(\mathcal H_{1})}^{(2)}
\stackrel{\text{\rm t}}{\sim} T_{q,\mathbf{s}(\mathcal H_{2})}^{(2)}
\Longleftrightarrow
\mathcal H_{1} \stackrel{\mathbf{s}}{\sim} \mathcal H_{2}.
\]
%------------ 
From   \eqref{R004TT}   and   \eqref{eq:pre-eq}, we  see that
for every $(h_1,\ldots,h_n)\in {\rm S}_{\rm f}(\mathcal O_{\infty}(\mathcal A))$  
and   any permutation $\pi$ of $\{1,\ldots,n\}$,
%------------eq 
\[
T_{q,\mathbf{s}(h_1,\ldots,h_n)}^{(2)}(F)
= T_{q,\mathbf{s}(h_{\pi(1)},\ldots,h_{\pi(n)})}^{(2)}(F)
\]
%------------ 
for all $F$  in $\mathfrak{A}^{(2)}$. Thus, the relation $\stackrel{\text{\rm t}}{\sim}$ 
is a well-defined equivalence relation, and so we can obtain the  quotient monoid
%------------eq 
\[
\begin{aligned}
{\mathsf{T}}_{q,{\rm S}_{\rm f}(\mathcal O_{\infty}(\mathcal A))}^{\sim} 
&\equiv \mathsf{T}_{q,{\rm S}_{\rm f}(\mathcal O_{\infty}(\mathcal A))}/\stackrel{\text{\rm t}}{\sim}\\
:&= \{[T_{q,\mathbf{s}(\mathcal H)}^{(2)}]_{\text{\rm t}}:T_{q,\mathbf{s}(\mathcal H)}^{(2)}
\in  \mathsf{T}_{q,{\rm S}_{\rm f}(\mathcal O_{\infty}(\mathcal A))}\}
\end{aligned}
\]
%------------ 
with the operation  $\barwedge$   given by
%------------eq-number
\begin{equation}\label{eq:quotientop-t}
[T_{q,\mathbf{s}(\mathcal H_{1})}^{(2)}]_{\text{\rm t}}
\barwedge [T_{q,\mathbf{s}(\mathcal H_{2})}^{(2)}]_{\text{\rm t}}
:=[T_{q,\mathbf{s}(\mathcal H_{1} \wedge  \mathcal H_{2} )}^{(2)}]_{\text{\rm t}}.
\end{equation}
%------------ 

%%%%%-------------------------------{theorem}%%%%%
\begin{theorem}
The map 
$\Xi:({\mathsf{T}}_{q,{\rm S}_{\rm f}(\mathcal O_{\infty}(\mathcal A))}^{\sim}, \barwedge )
\to ({\rm S}_{\rm f}^{\sim}, \wedge )$ given by
%------------eq-number
\begin{equation}\label{eq:xi=map}
\Xi([T_{q,\mathbf{s}(\mathcal H)}^{(2)}]_{\text{\rm t}})=[\mathcal H]_{\mathbf{s}}.
\end{equation}
%------------ 
is a monoid isomorphism.
\end{theorem}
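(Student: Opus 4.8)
The plan is to show that the map $\Xi$ is a well-defined bijection that respects the monoid operations. I would proceed in four steps: well-definedness, compatibility with the operations (homomorphism property), surjectivity, and injectivity.

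First, for well-definedness, suppose $[T_{q,\mathbf{s}(\mathcal H_1)}^{(2)}]_{\text{\rm t}}=[T_{q,\mathbf{s}(\mathcal H_2)}^{(2)}]_{\text{\rm t}}$. By the definition of the relation $\stackrel{\text{\rm t}}{\sim}$, this means precisely that $\mathcal H_1 \stackrel{\mathbf{s}}{\sim} \mathcal H_2$, i.e., $\mathbf{s}(\mathcal H_1)=\mathbf{s}(\mathcal H_2)$, which is exactly the condition $[\mathcal H_1]_{\mathbf{s}}=[\mathcal H_2]_{\mathbf{s}}$. Hence $\Xi$ is well defined; in fact this same equivalence, read backwards, will give injectivity. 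For the homomorphism property, one computes directly from the definitions \eqref{eq:quotientop-t} and \eqref{eq:quotientop-s}:
\[
\Xi\big([T_{q,\mathbf{s}(\mathcal H_1)}^{(2)}]_{\text{\rm t}} \barwedge [T_{q,\mathbf{s}(\mathcal H_2)}^{(2)}]_{\text{\rm t}}\big)
=\Xi\big([T_{q,\mathbf{s}(\mathcal H_1\wedge\mathcal H_2)}^{(2)}]_{\text{\rm t}}\big)
=[\mathcal H_1\wedge\mathcal H_2]_{\mathbf{s}}
=[\mathcal H_1]_{\mathbf{s}}\wedge[\mathcal H_2]_{\mathbf{s}},
\]
so $\Xi$ respects $\barwedge$ and $\wedge$. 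One should also note $\Xi$ carries the identity class (that of the empty sequence, equivalently of $0$) to the identity class, so it is a monoid homomorphism.

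For surjectivity, any element of ${\rm S}_{\rm f}^{\sim}$ has the form $[\mathcal H]_{\mathbf{s}}$ for some finite sequence $\mathcal H\in{\rm S}_{\rm f}(\mathcal O_{\infty}(\mathcal A))$, and then $T_{q,\mathbf{s}(\mathcal H)}^{(2)}$ lies in $\mathsf{T}_{q,{\rm S}_{\rm f}(\mathcal O_{\infty}(\mathcal A))}$ by definition, so $[T_{q,\mathbf{s}(\mathcal H)}^{(2)}]_{\text{\rm t}}$ is a preimage. For injectivity, suppose $\Xi([T_{q,\mathbf{s}(\mathcal H_1)}^{(2)}]_{\text{\rm t}})=\Xi([T_{q,\mathbf{s}(\mathcal H_2)}^{(2)}]_{\text{\rm t}})$, i.e., $[\mathcal H_1]_{\mathbf{s}}=[\mathcal H_2]_{\mathbf{s}}$; this says $\mathcal H_1\stackrel{\mathbf{s}}{\sim}\mathcal H_2$, which by the very definition of $\stackrel{\text{\rm t}}{\sim}$ gives $T_{q,\mathbf{s}(\mathcal H_1)}^{(2)}\stackrel{\text{\rm t}}{\sim}T_{q,\mathbf{s}(\mathcal H_2)}^{(2)}$, hence $[T_{q,\mathbf{s}(\mathcal H_1)}^{(2)}]_{\text{\rm t}}=[T_{q,\mathbf{s}(\mathcal H_2)}^{(2)}]_{\text{\rm t}}$.

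There is no serious obstacle here: the theorem is essentially a formal consequence of the way the two quotient monoids were constructed, since the relation $\stackrel{\text{\rm t}}{\sim}$ on transforms was defined to be the pullback of $\stackrel{\mathbf{s}}{\sim}$ along the assignment $\mathcal H\mapsto T_{q,\mathbf{s}(\mathcal H)}^{(2)}$. The one point deserving a line of care is verifying that the assignment $\mathcal H\mapsto [T_{q,\mathbf{s}(\mathcal H)}^{(2)}]_{\text{\rm t}}$ is itself well defined on ${\rm S}_{\rm f}^{\sim}$, which is already recorded in the paragraph preceding the theorem via \eqref{R004TT} and \eqref{eq:pre-eq}; with that in hand, $\Xi$ is simply the inverse bijection on equivalence classes, and checking it intertwines $\barwedge$ with $\wedge$ is the short computation displayed above.
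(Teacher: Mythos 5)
Your proposal is correct and follows essentially the same route as the paper: the paper's proof consists of exactly your displayed homomorphism computation using \eqref{eq:quotientop-t} and \eqref{eq:quotientop-s}, and then dismisses bijectivity as clear. You merely spell out the well-definedness, injectivity, and surjectivity that the paper leaves implicit (all of which, as you note, are formal consequences of $\stackrel{\text{\rm t}}{\sim}$ being defined as the pullback of $\stackrel{\mathbf{s}}{\sim}$).
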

%%%%%-------------------------------{proof}%%%%%
\begin{proof}
It follows from \eqref{eq:quotientop-t} and \eqref{eq:quotientop-s}  
that
%------------eq
\[
\begin{aligned}
 \Xi([T_{q,\mathbf{s}(\mathcal H_1)}^{(2)}]_{\text{\rm t}}
\barwedge [T_{q,\mathbf{s}(\mathcal H)}^{(2)}]_{\text{\rm t}})
&=\Xi( [T_{q,\mathbf{s}(\mathcal H_{1} \wedge \mathcal H_{2})}^{(2)}]_{\text{\rm t}})\\
&=[ \mathcal H_{1} \wedge \mathcal H_{2} ]_{\mathbf{s}}\\
&=[ \mathcal H_{1} ]_{\mathbf{s}}  \wedge [\mathcal  H_{2}]_{\mathbf{s}} \\
&=\Xi([T_{q,\mathbf{s}(\mathcal H_{1})}^{(2)}]_{\text{\rm t}}) \barwedge
\Xi([T_{q,\mathbf{s}(\mathcal H_{2})}^{(2)}]_{\text{\rm t}}).
\end{aligned}
\]
%------------ 
Clearly, the map given by equation \eqref{eq:xi=map} is   bijective.
\end{proof}

%%%%%%%%%%%%%%%%%%%%%%%%%%%%%%%%%%%%%%%%%%%%%%%%%%%%%%%%%%%%%
%%%%%%%%%%%%%% subsection                                  %%
%%%%%%%%%%%%%%%%%%%%%%%%%%%%%%%%%%%%%%%%%%%%%%%%%%%%%%%%%%%%%
\section{Free group 
$\mathsf{F}({\mathsf{T}}_{q,{\rm S}_{\rm f}(\mathcal O_{\infty}^n(\mathcal A))}^{\sim})$}

\par
In this section, we describe a transformation group that is the free group  
generated by ${\mathsf{T}}_{q,{\rm S}_{\rm f}(\mathcal O_{\infty}(\mathcal A))}^{\sim}$.

\par
Given an orthogonal set $\mathcal A$ of functions in $L_2[0,T]$,
let $\mathcal O_{\infty}^n(\mathcal A)$ be  the class of all nonzero 
functions $h$ in $L_{\infty}[0,T]$,  such that $\mathcal{A}h$ is orthonormal 
in $L_2[0,T]$. For a detailed example for the class  $\mathcal O_{\infty}^n(\mathcal A)$, 
see \cite[Example 2.2]{CC13}.

\par
The following lemma is due to Cameron and Storvick in \cite[Lemma H]{CS76-1}.

%%%%%-------------------------------{lemma}%%%%%
\begin{lemma} \label{lem:L2}
For $f \in L_2(\mathbb{R}^n)$ and $h\in\mathcal O_{\infty}^n(\mathcal A)$,
let $\psi_{f,\mathcal A h}^q(\vec r)$ be given by equation \eqref{eq:int-form-fn}. 
Then $\psi_{f,\mathcal A h}^q\in L_2(\mathbb{R}^n)$. The integral in the right 
side of \eqref{eq:int-form-fn}  is to be interpreted as an $L_2$-limiting integral
in the sense that
%------------eq
\[
\begin{aligned}
&\lim_{\delta \to \infty} \int_{\mathbb{R}^n} \Bigg|\bigg(\frac{-iq}{2\pi}\bigg)^{n/2}
\int_{-\delta}^\delta \stackrel{(n)}\cdots \int_{-\delta}^\delta
f(\vec u)\exp\Bigg\{\frac{iq}{2}\sum_{j=1}^n(u_j-r_j)^2\Bigg\}d\vec{u}\\
&\qquad\qquad\qquad\qquad\qquad\qquad\qquad\qquad\qquad\qquad\qquad
- \psi_{f,\mathcal A h}^q(\vec r)\Bigg|^2d\vec{r}=0.
\end{aligned}
\]
%------------ 
In this case, we have $\|\psi_{f,\mathcal A h}^q \|_2=\|f\|_2$.
\end{lemma}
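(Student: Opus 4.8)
The plan is to prove Lemma \ref{lem:L2} by reducing the general case (arbitrary $h\in\mathcal O_{\infty}^n(\mathcal A)$) to the case $h\equiv 1$, which is the content of \cite[Lemma H]{CS76-1}, and then to verify that the $L_2$-limiting interpretation of the integral in \eqref{eq:int-form-fn} is consistent with a change of variables. First I would observe that for $h\in\mathcal O_{\infty}^n(\mathcal A)$ we have $\|\alpha_j h\|_2=1$ for every $j\in\{1,\ldots,n\}$ by the very definition of $\mathcal O_{\infty}^n(\mathcal A)$ (orthonormality of $\mathcal A h$). Hence the weights $\|\alpha_j h\|_2^2$ in \eqref{eq:int-form-fn} all equal $1$, and the formula collapses to
\[
\psi_{f,\mathcal A h}^q(\vec r)
=\Big(\frac{-iq}{2\pi}\Big)^{n/2}
\int_{\mathbb R^n} f(\vec u)\exp\Big\{\frac{iq}{2}\sum_{j=1}^n(u_j-r_j)^2\Big\}d\vec u,
\]
which is exactly the transform appearing in the statement of \cite[Lemma H]{CS76-1} and in the displayed $L_2$-limit of the lemma. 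So in fact there is nothing to prove beyond invoking that reference and unwinding the normalization; I would state this cleanly.

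The key steps, in order, are: (1) record that $h\in\mathcal O_{\infty}^n(\mathcal A)$ forces $\|\alpha_j h\|_2=1$ for all $j$; (2) substitute this into \eqref{eq:int-form-fn} to reduce $\psi_{f,\mathcal A h}^q$ to the ``unit-weight'' Gaussian convolution-type operator; (3) quote \cite[Lemma H]{CS76-1} to conclude that this operator maps $L_2(\mathbb R^n)$ into itself, that the iterated-integral-over-cubes expression converges to it in $L_2(\mathbb R^n)$, and that it is an $L_2$-isometry, i.e. $\|\psi_{f,\mathcal A h}^q\|_2=\|f\|_2$. One should also note for completeness that the scalar prefactor $\big(\prod_{j=1}^n \frac{-iq}{2\pi\|\alpha_j h\|_2^2}\big)^{1/2}$ equals $\big(\frac{-iq}{2\pi}\big)^{n/2}$ under the same normalization, so the prefactors match as well; with the convention that the square root branch is chosen consistently (e.g. the principal branch, as is standard in this literature) there is no ambiguity.

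I do not expect a genuine obstacle here, since the lemma is essentially a restatement of a known result adapted to the present notation. The only point requiring a word of care is the interpretation of the integral: because $f$ need only lie in $L_2(\mathbb R^n)$ rather than $L_1(\mathbb R^n)$, the integral in \eqref{eq:int-form-fn} is not an ordinary Lebesgue integral but the $L_2$-limit of the truncated integrals over cubes $[-\delta,\delta]^n$, and this is precisely what the displayed limit in the lemma records. I would therefore be explicit that \eqref{eq:int-form-fn} is to be read in this limiting sense whenever $f\in L_2(\mathbb R^n)$, exactly as in \cite[Lemma H]{CS76-1}, and that the resulting map is a unitary operator on $L_2(\mathbb R^n)$ (indeed an $L_2$-version of the Fourier--Gauss transform), from which the norm identity $\|\psi_{f,\mathcal A h}^q\|_2=\|f\|_2$ is immediate. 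If a self-contained argument were wanted instead of a citation, one would prove the isometry first for $f$ in a dense subclass (say $\mathcal S(\mathbb R^n)$ or $L_1\cap L_2$) by a direct Gaussian Fourier-transform computation, then extend by density using the isometry itself to define the operator on all of $L_2(\mathbb R^n)$ — but given that \cite[Lemma H]{CS76-1} is available, the short route is preferable.
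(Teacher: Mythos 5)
Your proposal is correct and matches the paper's treatment: the paper gives no proof of this lemma, attributing it directly to Cameron and Storvick \cite[Lemma H]{CS76-1}, exactly as you do. Your added observation that $h\in\mathcal O_{\infty}^n(\mathcal A)$ forces $\|\alpha_j h\|_2=1$, so that \eqref{eq:int-form-fn} literally collapses to the transform in the cited lemma (consistent with the displayed $L_2$-limit, which already omits the weights), is a worthwhile clarification that the paper leaves implicit.
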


\par
In view of Theorem \ref{thm:admix-trans} and  Lemma \ref{lem:L2}, 
we obtain the following theorem.

%%%%%-------------------------------{theorem}%%%%%
\begin{theorem}\label{thm:Hilbert-iso}
Let  $q\in\mathbb R\setminus\{0\}$ and let $h\in\mathcal O_{\infty}^n(\mathcal A)$.
Then  $L_2$  analytic $\mathcal Z_h$-FFT,
%------------eq
\[
T_{q,h}^{(2)}: \mathfrak{A}^{(2)} \to  \mathfrak{A}^{(2)}
\]
%------------ 
is a linear operator  isomorphism. That is,
$\|F\|_{\mathfrak{A}^{(2)}}=\|T_{q,h}^{(2)}(F)\|_{\mathfrak{A}^{(2)}}$
for all $F\in \mathfrak{A}^{(2)}$. Thus we have $\|T_{q,h}^{(2)} \|_{\rm o}=1$, 
where $\|\cdot\|_{\rm o}$ denotes the operator norm.
\end{theorem}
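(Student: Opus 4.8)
The plan is to assemble the statement directly from Theorem~\ref{thm:admix-trans} and Lemma~\ref{lem:L2}. First observe that $\mathcal O_{\infty}^n(\mathcal A)\subseteq \mathcal O_{\infty}(\mathcal A)$, since an orthonormal set is in particular orthogonal; hence Theorem~\ref{thm:admix-trans} applies verbatim to every $h\in\mathcal O_{\infty}^n(\mathcal A)$. For $F\in\mathfrak{A}^{(2)}$ with associated $f\in L_2(\mathbb R^n)$, Theorem~\ref{thm:admix-trans}(i) tells us that $T_{q,h}^{(2)}(F)$ is the cylinder functional of the form \eqref{eq:cylinder-x} whose associated $\mathbb R^n$-function is $\psi_{f,\mathcal A h}^q$ from \eqref{eq:int-form-fn}. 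When $h\in\mathcal O_{\infty}^n(\mathcal A)$ we have $\|\alpha_j h\|_2=1$ for every $j$, so the normalizing constants in \eqref{eq:int-form-fn} collapse to $(-iq/2\pi)^{n/2}$ and the expression for $\psi_{f,\mathcal A h}^q$ becomes exactly the $L_2$-limiting kernel integral appearing in Lemma~\ref{lem:L2}. Consequently $\psi_{f,\mathcal A h}^q\in L_2(\mathbb R^n)$ with $\|\psi_{f,\mathcal A h}^q\|_2=\|f\|_2$, and in particular $T_{q,h}^{(2)}(F)\in\mathfrak{A}^{(2)}$.

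Next I would record linearity. The correspondence $F\mapsto f$ between $\mathfrak{A}^{(2)}$ and $L_2(\mathbb R^n)$ determined by \eqref{eq:cylinder-x} is linear (and, as noted in the excerpt, well defined modulo s-a.e.\ equality of functionals and almost-everywhere modifications of $f$), and $f\mapsto \psi_{f,\mathcal A h}^q$ is linear because it is a kernel integral operator acting linearly on $f$. Composing, $T_{q,h}^{(2)}:\mathfrak{A}^{(2)}\to\mathfrak{A}^{(2)}$ is a linear map. Its norm preservation is now immediate from the previous paragraph and the definition of $\|\cdot\|_{\mathfrak{A}^{(2)}}$:
\[
\|T_{q,h}^{(2)}(F)\|_{\mathfrak{A}^{(2)}}=\|\psi_{f,\mathcal A h}^q\|_2=\|f\|_2=\|F\|_{\mathfrak{A}^{(2)}}
\]
for every $F\in\mathfrak{A}^{(2)}$, so $T_{q,h}^{(2)}$ is an isometric linear embedding.

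It remains to show $T_{q,h}^{(2)}$ is onto. Since the membership $h\in\mathcal O_{\infty}^n(\mathcal A)$ does not involve $q$, Theorem~\ref{thm:admix-trans}(ii) is available for both the parameter $q$ and the parameter $-q$, giving $T_{-q,h}^{(2)}\circ T_{q,h}^{(2)}\approx \mathrm{Id}$ and $T_{q,h}^{(2)}\circ T_{-q,h}^{(2)}\approx \mathrm{Id}$ on $\mathfrak{A}^{(2)}$. Hence $T_{q,h}^{(2)}$ is a bijection of $\mathfrak{A}^{(2)}$ with two-sided inverse $T_{-q,h}^{(2)}$, and being also linear and isometric it is a linear operator isomorphism. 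Finally, since $\mathfrak{A}^{(2)}\neq\{0\}$ (it contains the Schwartz class) and $T_{q,h}^{(2)}$ is a surjective isometry, $\|T_{q,h}^{(2)}\|_{\rm o}=1$.

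The one point requiring \emph{genuine} care, rather than routine assembly, is the bookkeeping of which class of $h$ is needed where: Theorem~\ref{thm:admix-trans} holds for the larger class $\mathcal O_{\infty}(\mathcal A)$ (orthogonality of $\mathcal A h$ suffices there, and the normalizing factors $\|\alpha_j h\|_2$ genuinely appear in the formula), whereas the isometry $\|\psi_{f,\mathcal A h}^q\|_2=\|f\|_2$ of Lemma~\ref{lem:L2} is precisely the Plancherel-type identity for the ``pure'' quadratic-exponential kernel and requires $\|\alpha_j h\|_2=1$, i.e.\ $h\in\mathcal O_{\infty}^n(\mathcal A)$. So the essential verification is that, under this normalization, \eqref{eq:int-form-fn} reduces to the kernel integral of Lemma~\ref{lem:L2}; everything else is a direct quotation of the two results.
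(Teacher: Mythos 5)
Your proposal is correct and follows exactly the route the paper takes: the paper derives Theorem~\ref{thm:Hilbert-iso} directly from Theorem~\ref{thm:admix-trans} and Lemma~\ref{lem:L2}, with the isometry coming from $\|\psi_{f,\mathcal A h}^q\|_2=\|f\|_2$ and surjectivity from the inverse transform $T_{-q,h}^{(2)}$. Your added bookkeeping --- that $\mathcal O_{\infty}^n(\mathcal A)\subseteq\mathcal O_{\infty}(\mathcal A)$ and that the normalization $\|\alpha_j h\|_2=1$ is what reduces \eqref{eq:int-form-fn} to the kernel of Lemma~\ref{lem:L2} --- is precisely the detail the paper leaves implicit.
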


\par
For any nonzero real   $q$, let 
${{\mathsf{T}}_{q,{\rm S}_{\rm f}(\mathcal O_{\infty}^n(\mathcal A))}^{\sim}}^{\!\!\!\!*}
:=  {\mathsf{T}}_{q,{\rm S}_{\rm f}(\mathcal O_{\infty}^n(\mathcal A))}^{\sim}
\setminus\{[T_{q,0}^{(2)}]_{\mathbf{s}}\}$.
Given $q\in\mathbb R\setminus\{0\}$, define a map 
%------------eq
\[
\mathcal W:{{\mathsf{T}}_{q,{\rm S}_{\rm f}(\mathcal O_{\infty}^n(\mathcal A))}^{\sim}}^{\!\!\!\!*}
\longrightarrow 
{{\mathsf{T}}_{-q,{\rm S}_{\rm f}(\mathcal O_{\infty}^n(\mathcal A))}^{\sim}}^{\!\!\!\!*}
\]
%------------ 
by $\mathcal W([T_{q,\mathbf{s}(\mathcal H)}^{(2)}]_{\mathrm{t}})
=[T_{-q,\mathbf{s}(\mathcal H)}^{(2)}]_{\mathrm{t}}$.
Then, $\mathcal W$ is one-to-one correspondence. Thus, by the usual argument 
in the free group theory, one can obtain  the group
$\mathsf{F}({\mathsf{T}}_{q,{\rm S}_{\rm f}(\mathcal O_{\infty}^n(\mathcal A))}^{\sim})$ 
freely generated by  
${{\mathsf{T}}_{q,{\rm S}_{\rm f}(\mathcal O_{\infty}^n(\mathcal A))}^{\sim}}^{\!\!\!\!*}$.

\par
Note that 
%------------eq
\[
[T_{q,\mathbf{s}(\mathcal H_{1})}^{(2)}]_{\mathrm{t}} 
\barwedge [T_{q,\mathbf{s}(\mathcal H_{2})}^{(2)}]_{\mathrm{t}}
=[T_{q,\mathbf{s}(\mathcal H_{1} \wedge \mathcal H_{2})}^{(2)}]_{\mathrm{t}}
=[T_{q,\mathbf{s}(\mathcal H_{1})}^{(2)} 
\circ T_{\mathbf{s}(\mathcal H_{2})}^{(2)}]_{\mathrm{t}}.
\]
%------------ 
by equation  \eqref{R003T}. Given two transforms $\mathcal T_1$ and $\mathcal T_2$ in 
$\mathsf{F}({\mathsf{T}}_{q,{\rm S}_{\rm f}(\mathcal O_{\infty}^n(\mathcal A))}^{\sim})$,
let the group operation between $\mathcal T_1$ and $\mathcal T_2$ be given by
%------------eq
\[
(\mathcal T_1 \circ \mathcal T_2)(F)
\equiv  \mathcal T_1( \mathcal T_2(F)),\,\,\,  F\in \mathfrak{A}^{(2)}.
\]

\par
For an element  $\mathcal T$ of 
$\mathsf{F}({\mathsf{T}}_{q,{\rm S}_{\rm f}(\mathcal O_{\infty}^n(\mathcal A))}^{\sim})$,
let $\mathsf l_{w}(\mathcal T)$ denote the length of the word $\mathcal T$.
Given  $\mathcal T\in \mathsf{F}({\mathsf{T}}_{q,{\rm S}_{\rm f}
(\mathcal O_{\infty}^n(\mathcal A))}^{\sim})$, assume that $\mathcal T$ is  not the empty 
word (i.e., it is not the identity transform $[T_{q,0}^{(2)}]_{\mathrm{t}}$).
If $\mathsf l_{w}(\mathcal T)=1$, then  $\mathcal T$ is an element of the set
%------------eq 
\[
{{\mathsf{T}}_{q,{\rm S}_{\rm f}(\mathcal O_{\infty}^n(\mathcal A))}^{\sim}}^{\!\!\!\!*}
\,\,\,\dot\cup \,\,\,
{{\mathsf{T}}_{-q,{\rm S}_{\rm f}(\mathcal O_{\infty}^n(\mathcal A))}^{\sim}}^{\!\!\!\!*}\,\,.
\]
%------------ 
 Alternatively, if $\mathsf{l}_{w}(\mathcal T)>1$, then $\mathcal T$ cannot be expressed 
as   (an equivalence class of) a single  FFT by the concept of the reduced word 
in the free group theory. But, in view of  the assertion (ii) of Theorem \ref{thm:admix-trans} 
and Lemma \ref{lem:L2}, we see that for any  
$\mathcal T\in \mathsf{F}({\mathsf{T}}_{q,{\rm S}_{\rm f}(\mathcal O_{\infty}^n(\mathcal A))}^{\sim})$, 
$\mathcal T$ is a linear operator  isomorphism  from  $\mathfrak{A}^{(2)}$ into $\mathfrak{A}^{(2)}$.

\par
On the other hand, we consider  other algebraic structure 
of transforms as follows: given  $h\in L_{\infty}[0,T]$, let
$T_{0,h}^{(2)}$ denote the identity transform, i.e., $T_{0,h}^{(2)}(F)=F$, 
and let
%------------eq
\[
 \mathsf{T}_{\mathbb R,h} 
=\big\{T_{q,h}^{(2)}: q\in  \mathbb R \big\}.
\] 
%------------ 
Now, by using \cite[equation (2.14)]{HSS2001} (clearly, 
equations (2.10) and (2.14) in \cite{HSS2001} hold for $L_2$  analytic 
 FFTs $T_{q,h}^{(2)}$), we obtain   that for all $q_1,q_2\in\mathbb R$
with $q_1+q_2\ne0$ and all $F\in \mathfrak{A}^{(2)}$,
%------------eq
\[
 (T_{q_2,h}^{(2)}\circ  T_{q_1,h}^{(2)} )(F)
\equiv  T_{q_2,h}^{(2)} ( T_{q_1,h}^{(2)}(F) )
\approx T_{\frac{q_1q_2}{q_1+q_2},h}^{(2)}(F),
\]
%------------ 
and that
%------------eq
\[
 (T_{q_2,h}^{(2)}\circ  T_{q_1,h}^{(2)} )(F)
\approx  (T_{q_1,h}^{(2)}\circ  T_{q_2,h}^{(2)} )(F).
\]
%------------ 
If  $q_1+q_2=0$, then by Theorem \ref{thm:admix-trans},
we obtain  $T_{q_2,h}^{(2)} =T_{-q_1,h}^{(2)}
=\{T_{q_1,h}^{(2)}\}^{-1}$.

%%%%%-------------------------------{theorem}%%%%%
\begin{theorem}
For each $h\in L_{\infty}[0,T]$, the space $(\mathsf{T}_{\mathbb R,h},\circ)$
forms a  commutative group. Clearly, $\mathsf{T}_{\mathbb R,h}$ with $\|h\|_2=0$ 
is a trivial group.
\end{theorem}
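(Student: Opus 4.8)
The plan is to verify the group axioms directly, using only the composition identities recorded immediately before the statement together with the inversion formula of Theorem~\ref{thm:admix-trans}(ii). The elements of $\mathsf{T}_{\mathbb R,h}$ are transformations of $\mathfrak{A}^{(2)}$ taken modulo the equivalence $\approx$, and the operation is ordinary composition of maps, so associativity is automatic and every ``$=$'' below is to be read as ``$\approx$''. What then remains is closure, the existence of a two-sided identity, the existence of inverses, and commutativity; all of these reduce to case analysis on the parameters.

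First I would dispose of the degenerate case: if $\|h\|_2=0$, then $T_{q,h}^{(2)}$ is the identity transform for every $q\in\mathbb R$, so $\mathsf{T}_{\mathbb R,h}=\{T_{0,h}^{(2)}\}$ is a one-element, hence trivially commutative, group. For the rest assume $\|h\|_2>0$, and take $q=0$ to produce the identity transform $T_{0,h}^{(2)}\in\mathsf{T}_{\mathbb R,h}$; it serves as the group identity since $T_{0,h}^{(2)}\circ T_{q,h}^{(2)}=T_{q,h}^{(2)}=T_{q,h}^{(2)}\circ T_{0,h}^{(2)}$ for all $q$.

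Next, closure. Fix $q_1,q_2\in\mathbb R$. If $q_1=0$ or $q_2=0$, then $T_{q_2,h}^{(2)}\circ T_{q_1,h}^{(2)}$ equals $T_{q_2,h}^{(2)}$ or $T_{q_1,h}^{(2)}$, which lies in $\mathsf{T}_{\mathbb R,h}$. If $q_1,q_2\ne0$ and $q_1+q_2\ne0$, the recorded identity gives $T_{q_2,h}^{(2)}\circ T_{q_1,h}^{(2)}=T_{q_1q_2/(q_1+q_2),h}^{(2)}$, and since $q_1q_2/(q_1+q_2)$ is again a (nonzero) real number, this transform belongs to $\mathsf{T}_{\mathbb R,h}$. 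If $q_1,q_2\ne0$ and $q_2=-q_1$, then Theorem~\ref{thm:admix-trans}(ii) gives $T_{q_2,h}^{(2)}\circ T_{q_1,h}^{(2)}=T_{-q_1,h}^{(2)}\circ T_{q_1,h}^{(2)}=T_{0,h}^{(2)}\in\mathsf{T}_{\mathbb R,h}$. This exhausts the cases, so $\mathsf{T}_{\mathbb R,h}$ is closed under $\circ$. For inverses, $T_{0,h}^{(2)}$ is its own inverse, and for $q\ne0$ Theorem~\ref{thm:admix-trans}(ii), applied once as stated and once with $q$ replaced by $-q$, gives $T_{-q,h}^{(2)}\circ T_{q,h}^{(2)}=T_{0,h}^{(2)}=T_{q,h}^{(2)}\circ T_{-q,h}^{(2)}$, so $\{T_{q,h}^{(2)}\}^{-1}=T_{-q,h}^{(2)}\in\mathsf{T}_{\mathbb R,h}$. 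Finally, commutativity holds case by case: the recorded identity $T_{q_2,h}^{(2)}\circ T_{q_1,h}^{(2)}=T_{q_1,h}^{(2)}\circ T_{q_2,h}^{(2)}$ when $q_1+q_2\ne0$; composition with the identity transform when $q_1=0$ or $q_2=0$; and both sides collapsing to $T_{0,h}^{(2)}$ when $q_2=-q_1$.

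I do not anticipate a genuine obstacle: all of the analytic substance is already packaged into Theorem~\ref{thm:admix-trans} and into the composition identities imported from \cite{HSS2001}, and what is left is bookkeeping. The only points deserving care are (a) treating the subcases $q_i=0$ and $q_1+q_2=0$ separately, since the ``parallel'' formula $q_1q_2/(q_1+q_2)$ is undefined there; and (b) remaining explicit throughout that $\mathsf{T}_{\mathbb R,h}$ is a set of transformations up to $\approx$, so that closure and the group laws are assertions about the underlying maps rather than about the parameter labels $q$.
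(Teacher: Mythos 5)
Your proposal is correct and follows essentially the same route as the paper, whose ``proof'' is the paragraph immediately preceding the theorem: the composition law $T_{q_2,h}^{(2)}\circ T_{q_1,h}^{(2)}\approx T_{q_1q_2/(q_1+q_2),h}^{(2)}$ from \cite{HSS2001}, commutativity, and the inverse case $q_1+q_2=0$ via Theorem~\ref{thm:admix-trans}(ii). Your explicit case analysis on $q_i=0$ and $q_1+q_2=0$, and your insistence that the group laws hold modulo $\approx$, are just careful bookkeeping of what the paper leaves implicit.
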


\par
We note that  given $h\in L_{\infty}[0,T]$ with $\|h\|_2>0$, 
the transformation group $\mathsf{T}_{\mathbb R,h}$ is   
the free group 
%%(in the sense of  \cite[Theorem 2.1.3]{Robinson})
with the free basis $\{T_{q,h}^{(2)}: q>0\}$.
Because
%------------eq
\[
|\{T_{q,h}^{(2)}: q>0\}|
=\aleph_1
=\aleph_0^{\aleph_0}=|{\rm S}_{\rm f}|
\equiv  |{{\mathsf{T}}_{q,{\rm S}_{\rm f}
(\mathcal O_{\infty}^n(\mathcal A))}^{\sim}}^{\!\!\!\!*}|,
\]
%------------ 
the free groups 
$\mathsf{F}({\mathsf{T}}_{q,{\rm S}_{\rm f}(\mathcal O_{\infty}^n(\mathcal A))}^{\sim})$ 
and $\mathsf{T}_{\mathbb R,h}$ have the same rank. Thus, by the concept of rank
of free groups \cite[Proposition 2.1.4, p.47]{Robinson}, we conclude that
%------------eq
\[
\mathsf{F}({\mathsf{T}}_{q,{\rm S}_{\rm f}(\mathcal O_{\infty}^n(\mathcal A))}^{\sim})
\stackrel{\iota}{\cong}\mathsf{T}_{\mathbb R,h},
\]
%------------ 
where $\iota$ is a group isomorphism.

\section*{Acknowledgments}
This research was supported by the research fund of Dankook University in 2017.

%%%%%%%%%%%%%%%%%%%%%%%%%%%%%%%%%%%%%%%%%%%%%%%%%%%%%%%%%%%%%%%%%%
%%%%%%%%%%%%%%%%%%%%%%%%%%%%%%%%%%%%%%%%%%%%%%%%%%%%%%%%%%%%%%%%%%
%%%    references%%%                                           %%%   
%%%%%%%%%%%%%%%%%%%%%%%%%%%%%%%%%%%%%%%%%%%%%%%%%%%%%%%%%%%%%%%%%%
%%%%%%%%%%%%%%%%%%%%%%%%%%%%%%%%%%%%%%%%%%%%%%%%%%%%%%%%%%%%%%%%%%

\end{document}